\theoremstyle{plain}
\newtheorem{theorem}{Theorem}[section]
\newtheorem{lemma}[theorem]{Lemma}
\newtheorem{proposition}[theorem]{Proposition}
\newtheorem{corollary}[theorem]{Corollary}
\theoremstyle{definition}
\newtheorem{example}[theorem]{Example}
\newtheorem{assumption}[theorem]{Assumption}
\newtheorem{remark}[theorem]{Remark}
\newcommand*{\argmin}{\operatornamewithlimits{arg\,min}}
\newcommand*{\argmax}{\operatornamewithlimits{arg\,max}}
\newcommand{\R}{\mathbb{R}}
\newcommand{\N}{\mathbb{N}}
\newcommand{\F}{\mathcal{F}}
\newcommand{\half}{\frac{1}{2}}
\newcommand{\abs}[1]{\left\lvert#1\right\rvert} 
\newcommand{\indic}[1]{\mathds{1}_{#1}} 
\newcommand{\E}{\mathsf{E}}
\DeclareMathOperator{\cost}{cost}
\DeclareMathOperator{\const}{const}
\newcommand{\Rd}{\mathbb{R}^{d}}
\newcommand{\vopt}{v^{*}}
\newcommand{\copt}{c^{*}}
\newcommand{\vmc}{v}
\newcommand{\cmc}{c}
\numberwithin{equation}{section}
\begin{document}
\author{C.~Bayer}
\address{Christian Bayer \\ Weierstrass Institute \\ Berlin, Germany}
\email{christian.bayer@wias-berlin.de}

\author{D.~Belomestny}
\address{Denis Belomestny \\ Duisburg-Essen University, Germany and National University Higher School of Economics, Russia}
\email{denis.belomestny@uni-due.de}

\author{P.~Hager}
\address{Paul Hager \\
Institut f\"ur Mathematik, Humboldt Universit\"at zu Berlin \\ Germany}
\email{paul.hager@hu-berlin.de}

\author{P.~Pigato}
\address{Paolo Pigato \\ Department of Economics and Finance \\ University of Rome Tor Vergata \\ Italy}
\email{paolo.pigato@uniroma2.it}

\author{J.~Schoenmakers}
\address{John Schoenmakers \\ Weierstrass Institute \\ Berlin, Germany}
\email{john.schoenmakers@wias-berlin.de}

\author{V.~Spokoiny}
\address{Vladimir Spokoiny \\ Weierstrass Institute \\ Berlin, Germany}
\email{vladimir.spokoiny@wias-berlin.de}

\begin{abstract}
  Least squares Monte Carlo methods are a popular numerical approximation method for solving stochastic control problems. Based on dynamic programming, their key feature is the approximation of the conditional expectation of future rewards by linear least squares regression. Hence, the choice of basis functions is crucial for the accuracy of the method. Earlier work by some of us [Belomestny, Schoenmakers, Spokoiny, Zharkynbay. Commun.~Math.~Sci., 18(1):109–121, 2020] proposes to \emph{reinforce} the basis functions in the case of optimal stopping problems by already computed value functions for later times, thereby considerably improving the accuracy with limited additional computational cost. We extend the reinforced regression method to a general class of stochastic control problems, while considerably improving the method's efficiency, as demonstrated by substantial numerical examples as well as theoretical analysis.
\end{abstract}

\thanks{C.B., P.H, P.P. J.S. and V.S. were supported by the MATH+ project AA4-2 Optimal control in energy markets using rough analysis and deep networks.
The authors are also thankful to the associate editor and anonymous referees for there feedback and suggestions.}

\subjclass[2020]{91G20,93E24}

\keywords{Reinforced regression, least squares Monte Carlo, stochastic optimal control}

\title{Reinforced optimal control}

\maketitle

\section{Introduction}
\label{sec:introduction}

Stochastic control problems form an important class of stochastic optimization problems that find applications in a wide variety of fields, see \cite{pham2009continuous} for an overview. The general problem can be formulated as follows: How should a decision-maker control a system with a stochastic component to maximize the expected reward?
In the theory of stochastic control, one distinguishes between problems with continuous and discrete sets of possible control values. While the first class of control problems contains, for example, energy storage problems, the second one includes stopping and multiple stopping problems. Furthermore one differentiates between discrete-time and continuous-time optimal control problems.
(Neither of these distinctions is fundamental: for instance, many numerical methods will replace optimal control problems with a continuous set of control values in continuous time by a surrogate problem with discrete control values in discrete time. Moreover, many discrete optimal control problems may well be analyzed as continuous ones, if the number of possible control values or time-steps is finite but very high).
\par
The range of applications of stochastic control problems is very wide. Originally, optimal stochastic continuous control problems were inspired by engineering problems in the continuous control of a dynamic system in the presence of random noise, see \cite{aastrom2012introduction} and references therein. In the last decades, problems in mathematical finance (portfolio optimization, options with variable exercise possibilities) and economics inspired many new developments, see \cite{bauerle2011markov} for some recent developments.  Let us also mention a closely connected area of reinforcement learning with plethora of applications in robotics, data science, and engineering, see  \cite{sutton2018reinforcement}.
\par
As a canonical general approach for solving  a discrete-time optimal control problem one may  consider all possible future
evolutions of the process at each time that a control choice is to be made, see \cite{aastrom2012introduction}.
This method is well developed and may be effective in some special cases but for more general problems such as optimal control of diffusion in high dimensions,
this approach is impractical.  
In \cite{belomestny2009regession} a generic  Monte Carlo approach
combined with linear regression was proposed and studied, see also \cite{BSbook} for an overview.
However, as an important
disadvantage,  there may be not enough flexibility when modeling highly non-linear behavior of optimal value functions.
For instance, a regression based on higher-degree polynomials or local polynomials (splines) may contain too many
parameters and, therefore, may over-fit the Monte Carlo sample or even prohibit
parameter estimation because the number of parameters is too large.  As an alternative to the polynomial bases,  nonlinear approximation structures (e.g., artificial neural networks) can be used instead (see, e.g. \cite{han2016deep},\cite{becker2019deep} and \cite{becker2019solving}).

\par
In \cite{belomestny2020optimal}  a {Monte Carlo based {\em reinforced regression}  approach} is developed for building sparse
regression models at each backward step of the dynamic programming algorithm in the case of optimal stopping problems.
In a nutshell, the idea is to start with a generic set of basis functions, which is systematically enlarged with highly problem-dependent additional functions.
The additional basis functions are constructed  for the
optimal stopping problem at hand without using a fixed predefined finite
dictionary. The new basis functions are learned during the
backward induction via incorporating information from the preceding backward
induction step. More specifically, the (computed, hence approximate) value function at time $t_{i+1}$ is used as an additional basis function at time $t_i$.
Thereby, basis functions highly specific to the problem at hand are constructed in a completely automatic way. Indeed, the continuation function at time $t_i$ can often be observed to be very close to the value function at time $t_{i+1}$, especially when the time-step $t_{i+1} - t_i$ is small -- alluding to continuity in time of the solution to some continuous time version of the optimal stopping problem.
\cite{belomestny2020optimal} report that the reinforced basis leads to increased precision over the starting set of basis functions, comparable to the standard regression algorithm based on a substantially increased set of basis functions. This improvement is obtained with a limited increase of the computational cost.
\par
In this work, we carry over the approach of \cite{belomestny2020optimal} to a general class of
discrete-time optimal control problems including multiple stopping problems
(thus allowing pricing of swing options) and a gas storage problem. This
generalization turns out to be rather challenging as the complexity of using
the previously constructed value function in regression basis at each step of
the backward procedure becomes prohibitive when applying the original approach
of \cite{belomestny2020optimal}. We overcome this computational bottleneck by introducing a novel version of the original reinforced regression algorithm where one uses a
hierarchy of fixed time-depth approximation of the optimal value function instead of a full-depth approximation employed in \cite{belomestny2020optimal}. As a result, we regain
efficiency and are able to improve upon the standard linear regression algorithm in terms of achievable precision for a given computational budget.
\par
More precisely, we construct a hierarchy $v^{(i)}$, $i = 0, \ldots, I$, of (approximate) value functions with depth $I > 0$. Here, $v^{(0)}$ denotes the value functions obtained from the classical Monte Carlo regression algorithm. The higher levels $v^{(i)}$ are computed by regression based on a set of basis functions reinforced by the value function $v^{(i-1)}$ one level lower. This way, the added computational cost incurred from reinforcing the basis can be further decreased with minimal sacrifices of accuracy already for small values of $I$.
In fact, we propose two versions of the algorithm. In the first version, the levels of the hierarchy of value functions are trained consecutively, allowing for an adaptive choice of the depth $I$ of the hierarchy. In the second version, all the levels are trained concurrently, thereby improving the accuracy at each individual level. As a consequence, $I$ needs to be fixed in advance and cannot be chosen adaptively in the second variant.

\subsection*{Outline of the paper}
\label{sec:outline-paper}

In Section~\ref{sec:setting} we describe a rather general setting for discrete stochastic control problems which we are going to use in this paper. The setting is based on~\cite{gyurko2011monte}. We recall the reinforced regression algorithm for optimal stopping problems by~\cite{belomestny2020optimal} in detail in Section~\ref{sec:reinf-regr}. There we also motivate the hierarchical construction of the new reinforced regression algorithm as restricted to the optimal stopping problem. The full algorithm -- including both variants -- is introduced in Section~\ref{sec:altern-algor}. A detailed analysis of computational costs is provided in Section~\ref{sec:computational-cost}. The next Section~\ref{sec:convergence-analysis} provides a detailed convergence analysis for the standard and reinforced regression algorithms in the current setting. Extensive numerical examples including optimal stopping problems, multiple stopping problems and a gas storage optimization problem are provided in Section~\ref{sec:numerical-examples}.

\section{Setting}
\label{sec:setting}

First, we present a proper setting for the construction and analysis of reinforced regression algorithms. The setting will be largely based on~\cite{gyurko2011monte}. We will consider stochastic control problems in discrete time with finite action sets. We note that extensions to continuous action sets are certainly possible, but are left to future research.

We consider a filtration $\F_j$, $j=0, \ldots, J$, which is extended by
$\F_{-1} \coloneqq \set{\emptyset, \Omega}$, $\F_{J+1} \coloneqq \F_J$ for
convenience. Let $X$ be a Markov process with values in \(\mathcal{X}\) adapted to
$(\F_j)_{j=0, \ldots, J}$. Note that we assume that the dynamics of the underlying process $X$ does not depend on the control.

At time $0 \le j \le J$ we are given a \emph{control} $Y_j$, which is
$\F_{j-1}$-measurable, and an $\F_j$-measurable \emph{cash-flow} $Z_j = H_j(a,
Y_j, X_j)$ for some deterministic, measurable function $H_j$, where $a$ is an
\emph{action} that we may choose at time $j$ in some finite action space $\mathcal{K}$. Note that cash-flows may be
positive or negative. We assume that the control $Y_j$ takes values in a
finite set $\mathcal{L}$.

\begin{remark}
  \label{rem:finite-action}
  The assumption that actions $a$ and controls $y$ take values in finite sets $\mathcal{K}$ and $\mathcal{L}$, respectively, is a weaker assumption than it may seem at first sight. Many important control problems naturally fall into this class, see examples below. Even more importantly, it is a well-known fact that many optimal control problems with genuinely continuous action and control spaces have solutions of \emph{bang-bang} type, i.e., all optimal controls consist of actions taken from a finite set, usually at the boundaries of the (continuous) action sets. Hence, such control problems can effectively be reduced to control problems with finite actions sets. Extensions of the reinforced regression algorithm to infinite action spaces will be studied in future work.
\end{remark}
For a given value of the control $y \in \mathcal{L}$ and a given
value $x$ of the underlying process $X_j$, we are given a set of admissible actions
\begin{equation}
  \label{eq:1}
  K_j(y,x) \subset \mathcal{K}, \quad j=0, \ldots, J,
\end{equation}
i.e., $a$ is admissible iff $a \in K_j(x,y)$.
Finally, if we apply $a
\in K_j(Y_j, X_j)$, then the control is updated by
\begin{equation}
  \label{eq:2}
  Y_{j+1} \coloneqq \varphi_{j+1}(a, Y_j), \quad \varphi_{j+1}: \mathcal{K}
  \times \mathcal{L} \to \mathcal{L}.
\end{equation}
Suppose that the control and the
underlying state process take values $Y_j$ and $X_j$ at time  $0 \le j \le J$, respectively.
For $\mathbf{a} \coloneqq (a_j, \ldots, a_J) \in \mathcal{K}^{J-j+1}$ and $j \le \ell \le J-1$, we define
\begin{equation}
  \label{eq:3}
  Y_{\ell+1}(\mathbf{a};j,Y_j) \coloneqq \varphi_{\ell+1}(a_\ell, Y_\ell(\mathbf{a};j,Y_j)),
  \quad Y_j(\mathbf{a}; j, Y_j) \coloneqq Y_j,
\end{equation}
noting that $Y_{\ell}(\mathbf{a}; j,Y_j)$ only depends on $a_j, \ldots,
a_{\ell-1}$. Additionally, we define $\mathbb{F}_{j,J}(\mathcal{K})$ to be the set
of $(\F_\ell)_{\ell=j}^J$-adapted processes taking values in $\mathcal{K}$
indexed by $j, \ldots, J$. Clearly, if $\mathbf{A} \coloneqq (A_\ell)_{\ell=j}^J \in
\mathbb{F}_{j,J}(\mathcal{K})$ and $Y_j \in \F_{j-1}$, then the process
$Y_\cdot(\mathbf{A}; j, Y_j)$ is previsible. The set of
\emph{admissible strategies or admissible policies}
$\mathcal{A}_j$ is defined as follows:
\begin{equation}
  \label{eq:4}
  \mathcal{A}_j(Y_j,X_{\ge j}) \coloneqq \Bigl\{ \mathbf{A} = (A_\ell)_{\ell=j}^J \in
  \mathbb{F}_{j,J}(\mathcal{K}) \, \Big| \, A_\ell \in
  K_\ell(Y_\ell(\mathbf{A};j,Y_j),X_\ell), \quad \ell = j, \ldots, J \Bigr\}.
\end{equation}
Now the central issue is the optimal control problem
\begin{equation}
  \label{eq:6}
  V_j \coloneqq \sup_{\mathbf{A} = (A_\ell)_{\ell=j}^J \in \mathcal{A}_j(Y_j,X_{\ge j})} \E_j\left[ \sum_{\ell=j}^J
    H_\ell(A_\ell, Y_\ell(\mathbf{A}; j, Y_j), X_\ell) \right],
\end{equation}
at a generic time $0\le j\le J,$ where $\E_j$ denotes the conditional expectation w.r.t.~$\F_j$.

Taking advantage of the Markov property, we introduce the notation
$\mathcal{A}_j(y,x):= \mathcal{A}_j(y,X^x_{\ge j}),$
where $X^{j,x}$ denotes the Markov process $X$ conditioned on $X_j = x$, and is defined for $j \le \ell \le J$.
We may then define the \emph{value function} as
\begin{equation}
  \label{eq:7}
  \vopt_j(y,x) \coloneqq \sup_{\mathbf{A} = (A_\ell)_{\ell=j}^J\in \mathcal{A}_j(y, x)} \E\left[ \sum_{\ell=j}^J
    H_\ell\left(A_\ell, Y_\ell(\mathbf{A}; j, y), X_\ell^{j,x}\right)  \right],
\end{equation}
which satisfies the \emph{dynamic programming principle}:
\begin{equation}
  \label{eq:8}
  \vopt_j(y,x) = \sup_{a \in K_j(y,x)} \left( H_j(a, y, x) + \E\left[
      \vopt_{j+1}(\varphi_{j+1}(a, y), X_{j+1}^{j,x}) \right] \right).
\end{equation}
for $j=0, \ldots, J$ (with $\vopt_{J+1}(y,x) \coloneqq 0$).

Let us now give a few examples for
classical stopping and control problems which fall into the above setup.

\begin{example}
  \label{ex:single-stopping}
  For a single optimal stopping problem with payoff $g_j\ge0$ at time $j$, the set
  of possible control values is $\mathcal{L} = \set{0,1}$,  where a control state $y$
  denotes the number of remaining exercise opportunities. The action $a$ takes
  the value $1$ if we stop at the current time and $0$ otherwise.
  Hence, we have
  \begin{equation*}
    K_j(y,x) = K(y) \coloneqq
    \begin{cases}
      \set{0,1}, & y = 1,\\
      \set{0}, & y = 0,
    \end{cases}
  \end{equation*}
  implying that $\mathcal{K} = \set{0,1}$. The cash-flow is defined by
  \begin{equation*}
    H_j(a,y,x) \coloneqq a \, g_j(x),
  \end{equation*}
  independent of the value of the control $y$. Finally, the update function
  of the control is defined by $\varphi_{j+1}(a,y) \coloneqq \max(y-a, 0).$ Note that the value function $v_j^\ast(0,\cdot) \equiv 0$, and, hence, the optimal stopping literature usually only considers $(j,x) \mapsto v_j^\ast(1,x)$.
\end{example}

\begin{example}
  \label{ex:multiple-stopping}
  Let us now suppose that we have a multiple stopping problem with $L \in \N$
  exercise rights. Again, the control state $y$ signifies the remaining exercise
  opportunities, leading to $\mathcal{L} = \set{0,1, \ldots, L}$. The
  admissible action  set is now defined as
  \begin{equation*}
    K_j(y,x) = K(y) \coloneqq
    \begin{cases}
      \set{0,1}, & y \ge 1,\\
      \set{0}, & y = 0.
    \end{cases}
  \end{equation*}
  Again, $\mathcal{K} = \set{0,1}$. The cash-flow $H_j$ and the
  update function $\varphi_{j+1}$ are defined as in
  Example~\ref{ex:single-stopping}.
\end{example}

\begin{example}
  \label{ex:gas-storage}
  Consider a simple gas storage problem: given $N \in \N$ and $\Delta = 1/N$,
  we assume that the volume of gas in a storage can only be increased and decreased
  by a fraction $\Delta$ over a given time increment. Let the control $y$
  denote the status (fill level) of the storage at time $j$. Hence, we define $\mathcal{L} \coloneqq \set{0, \Delta, 2\Delta, \ldots, 1}.$
  At time $j$, we may either sell $\Delta$ (volume of gas; $a = -1$), buy
  $\Delta$ ($a = +1$) -- at the current market price $X_j$ -- or do nothing
  ($a = 0$). Hence, the admissible policy set is
  \begin{equation*}
    K_j(y) \coloneqq
    \begin{cases}
      \set{0,1}, & y = 0, \\
      \set{-1,0,1}, & \Delta \le y \le 1-\Delta, \\
      \set{-1,0}, & y = 1,
    \end{cases}
  \end{equation*}
  with $\mathcal{K} = \set{-1,0,1}$, while the cash-flow is given by
  \begin{equation*}
    H_j(a, y, x) \coloneqq -a \Delta x.
  \end{equation*}
  The update function in given by $ \varphi_{j+1}(a,y) \coloneqq ((y+a\Delta)\wedge 1) \vee 0.$
\end{example}

\begin{remark}
  \label{rem:optimal-control}
  While we do not allow the actions to have an effect on the dynamics of the state process $X$, a large class of more general control problems could be incorporated by a simple modification of our setting. If we allow updates of the control variable $y$ to depend on the state $x$ as well as on the previous control and the action, i.e., $Y_{j+1} = \varphi_j(a, Y_j, X_j)$, then our theoretical analysis remains intact. However, it now becomes possible to control the dynamics of the state process $X$, provided that the law of the controlled process remains absolutely continuous w.r.t.~the law of the original process $(X_j)_{j=0, \ldots, J}$. We refer to the discussion of the optimal liquidation example in \cite[Section 2]{gyurko2011monte} for more details. Note, however, allowing $Y$ to depend on $X$ in such a way might require us to use regression in $(x,y)$ rather than just $x$ for most practical problems.
\end{remark}

\section{Reinforced regression for optimal stopping}
\label{sec:reinf-regr}

In this section, we recall the standard regression algorithm as well as the  reinforced regression algorithm introduced in \cite{belomestny2020optimal} for optimal stopping problems. We will point out the drawbacks of the latter algorithm for more general control problems, and propose and motivate several  modifications.
However, for the purpose of a clear illustration, we will restrict ourselves in this section to the optimal stopping case.

Let us recall the optimal stopping setup from Example \ref{ex:single-stopping} and denote by $\vopt_j(x)$ the value function at $j \in \{0, ..., J\}$ evaluated in $x\in\Rd$ and $y = 1$.
Further recall that the dynamic programming principle is given by
\begin{equation*}
\vopt_j(x) = \max(g_j(x), \copt_j(x)),\;\; 0\le j \le J-1,\quad \vopt_J(x)=g_J(x), \quad x\in\Rd,
\end{equation*}
 where the continuation function is given by $\copt_j(x) = \E_j[\vopt_{j+1}(X^{j,x}_{j+1})]$.
Fix a set of basis function $\{\psi_1, ..., \psi_K\}$ with $\psi_k: \Rd \to \R$, $k=1, \ldots, K$, and sample trajectories $(X^{(m)}_j)_{0\le j \le J, 1\le m \le M}$ from the underlying Markov chain, i.e., $(X^{(m)}_j)_{0\le j \le J}$ are i.i.d.~samples from the distribution of $(X_j)_{0 \le j \le J}$, $m=1, \ldots, M$. Then the regression method due to Tsitsiklis-van Roy \cite{tsitsiklis2001regression}, which we will refer to as the \emph{standard regression method}, inductively constructs an approximation $\vmc = (\vmc_j)_{j=0,...,J}$ to the value function $\vopt$ as follows:
For $j=J$ initialize $\vmc_J = g_J$.
For $j \in \{J-1, ..., 0\}$ set
\begin{align}\label{eq:optimal_stopping_bellman}
\vmc_j(x) := \max(g_j(x), \cmc_j(x)), \quad \cmc_j(x) = \sum_{k=1}^{K} \gamma_{j,k} \psi_k(x),
\end{align}
where the regression coefficients are given by the solution to the least squares problem
\begin{align}\label{eq:optimal_stopping_least_squares}
\gamma_{j,1}, ..., \gamma_{j,K} := \underset{\gamma_{1}, ..., \gamma_{K}}{\argmin}\sum_{m=1}^{M}\bigg\vert \vmc_{j+1}(X^{(m)}_{j+1}) - \sum_{k=1}^{K} \gamma_k \psi_k(X^{(m)}_j) \bigg\vert^{2}.
\end{align}
The procedure is illustrated in Figure \ref{fig:standard_regression}.
Note that the costs of this algorithm are of the order $M \cdot J \cdot K^{2}$ (see, e.g., \cite{belomestny2020optimal} or Section \ref{sec:computational-cost}).

\begin{figure}[h]\centering
\includegraphics[scale=0.4]{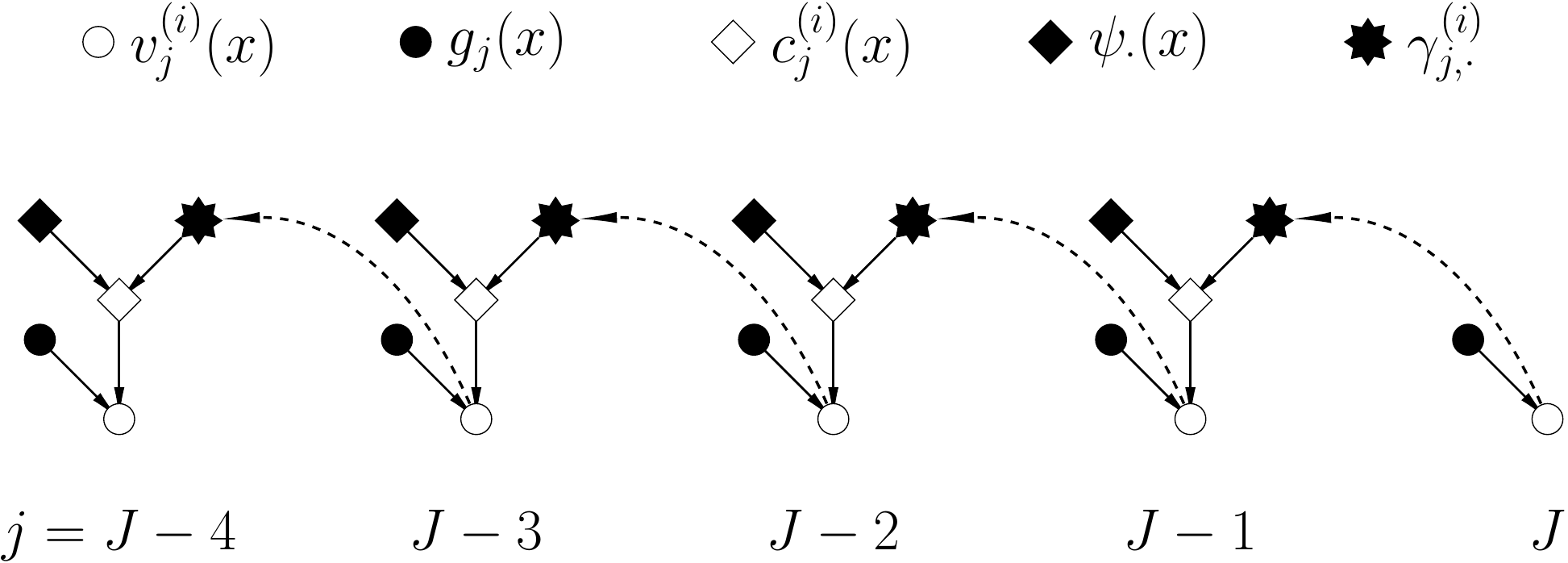}
\caption{Illustration of standard regression approach due to Tsitsiklis-van Roy \cite{tsitsiklis2001regression}.
The solid arrows indicate the dependencies in the (feed forward) evaluation of $\cmc_j$ and $\vmc_j$ in \eqref{eq:optimal_stopping_bellman}.
The dashed arrows start from the regression data $\vmc_{j+1}$ and symbolize the regression procedure \eqref{eq:optimal_stopping_least_squares}.}\label{fig:standard_regression}
\end{figure}

One problem of the standard regression algorithm is that its performance strongly depends on the choice of basis functions. Indeed, while standard classes such as polynomials or splines usually form the backbone of the construction of basis functions, practitioners usually add customized basis functions, for instance the payoff function $g_j$ and some functionals applied to it.

As a more systematic approach, the authors of \cite{belomestny2020optimal} proposed a \emph{reinforced regression algorithm}.
In this procedure the regression basis at each step of the backward induction is reinforced with the approximate value function from the previous step of the induction.
The approximate continuation function at $j\in\{0, ..., J-1\}$ is then given by
\begin{equation*}
\cmc_j(x) \coloneqq \sum_{k=1}^{K}\gamma_{j,k} \psi_k(x) + \gamma_{j, K+1} \vmc_{j+1}(x),
\end{equation*}
where the regression coefficient are the solution to the least squares problem
\begin{equation*}
\gamma_{j,1}, ..., \gamma_{j,K+1} \coloneqq \underset{\gamma_1, ..., \gamma_{K+1}}{\argmin}\sum_{m=1}^{M}\bigg\vert \vmc_{j+1}(X^{(m)}_{j+1}) - \sum_{k=1}^{K} \gamma_k \psi_k(X^{(m)}_j) - \gamma_{K+1}\vmc_{j+1}(X^{(m)}_j) \bigg\vert^{2}.
\end{equation*}
Note that this procedure induces a recursion whenever an approximate value function is evaluated: in order to evaluate $\vmc_j(x)$ we need to evaluate $\cmc_j(x)$, which in turn requires an evaluation of $\vmc_{j+1}(x)$ and so forth, until $\vmc_{J}(x) = g_J(x)$ terminates the recursion.
Figure \ref{fig:reinforced_regression} illustrates this procedure.
The costs of the reinforced regression method are proportional to $M \cdot J \cdot K^{2} + M \cdot J^{2} \cdot K$ (see \cite{belomestny2020optimal}).

\begin{figure}[h]\centering
\includegraphics[scale=0.4]{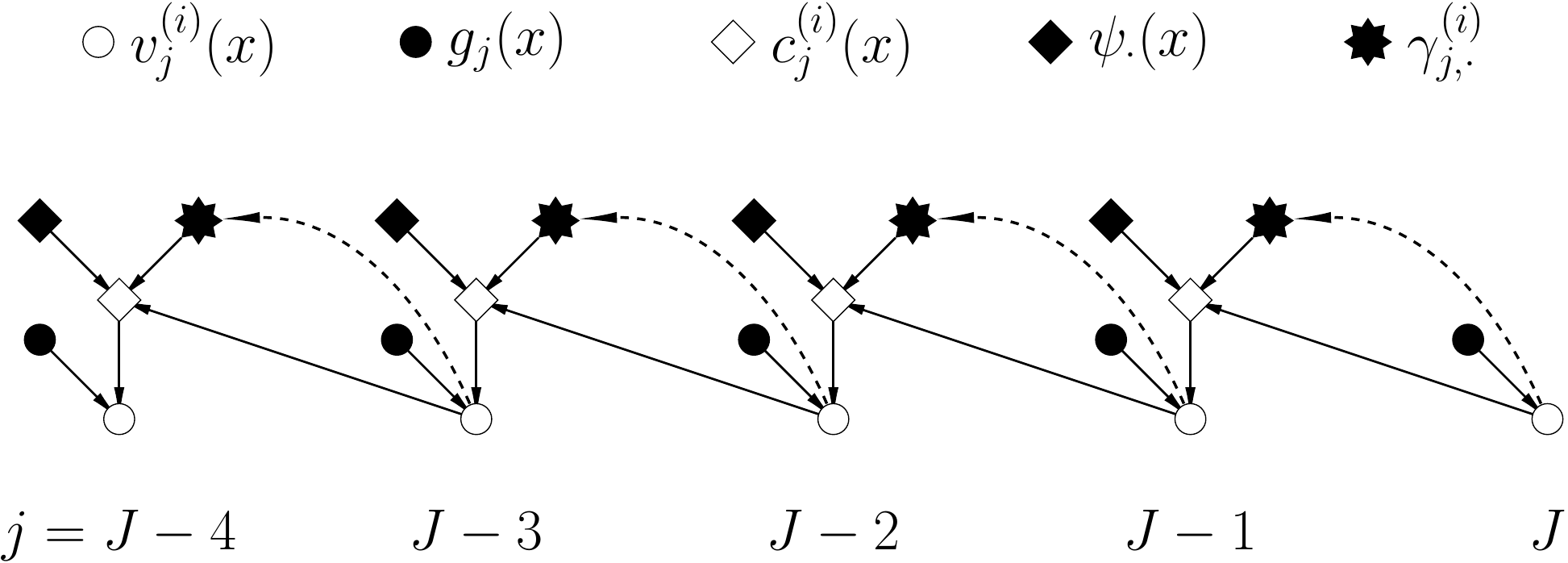}
\caption{Illustration of the reinforced regression approach.
Evaluation of $\vmc_j$ in the reinforced regression algorithm leads to a recursion with $J-j$ steps.}\label{fig:reinforced_regression}
\end{figure}

Despite the increased computational cost compared to the standard regression algorithm with the same set of basis functions $\psi_1, \ldots, \psi_K$, the reinforced regression algorithm can improve the overall computational cost for a fixed error tolerance drastically. As a rule of thumb, \cite{belomestny2020optimal} report that the reinforced regression algorithm with a standard basis consisting of polynomials of a given degree leads to similar accuracy as the standard regression algorithm based on polynomials of one degree higher. In particular, the reinforced regression algorithm already outperforms the standard regression algorithm for small dimensions $d>1$, as long as the number $J$ of time-steps is not too large.

A direct generalization of the reinforced regression algorithm to more general control problems is certainly possible. The main difference to the optimal stopping problem is that for fixed time $j$ we have to choose from many potential candidates to reinforce with, namely any $\vmc_{j+1}(y, \cdot),$ $y \in \mathcal{L}$ is a candidate. Additionally, the dynamic programming principle~\eqref{eq:8} now entails a possibly non-trivial optimization problem in terms of the policy $a$. Especially the second point makes the recursion at the heart of the reinforced regression algorithm untenable for general control problems.

One solutions immediately comes to mind: If performing the recursion all the way to terminal time $J$ is too costly, why not truncate at a certain recursion depth? This idea is, in principle, sound, and is the basis of the adaptations suggested below. However, some care is needed in the implementation of this idea. Indeed, if ``truncation'' simply were to mean ``replace the reinforcing basis functions by $0$ after a certain truncation step'', this would introduce a structural error in the procedure, as regression coefficients formerly computed in the presence of these basis functions would suddenly be incorrect. Instead, we propose to compute a \emph{hierarchy} of reinforced regression solutions, corresponding to different ``cut-off depths'' of the recursion. This way, we can make sure that the coefficients are always consistent, that is, an error as mentioned above can be avoided. We introduce two versions, which both adhere to the same general idea, but differ in an important implementation detail.

The \emph{hierarchical reinforced regression algorithm A} iteratively  constructs approximations \break $(v^{(i)})_{i=0, 1, ...}$ to the true value function as follows:
For $i=0$ we construct $(v^{(0)}_j)_{0\le j \le J}$ using the standard regression method described above.
Then for any $i \ge 1$, given that $v^{(l)}$ is already constructed for $0 \le l \le i-1$, define $v^{(i)}$ with the usual backwards induction, where the regression basis at step $j \in \{J-1, ..., 0\}$ is reinforced with $v^{(i-1)}_j$. The approximate continuation function of the $i^{th}$ iteration is given by
\begin{equation}\label{eq:optimal_stopping_adapted_continuation}
c^{(i)}_j(x) \coloneqq \sum_{k=1}^{K}\gamma^{(i)}_{j,k} \psi_k(x) + \gamma_{j,K+1}^{(i)} v^{(i-1)}_{j+1}(x),
\end{equation}
where the regression coefficients are the solutions to the least squares problem
\begin{equation*}
\gamma^{(i)}_{j,1}, ..., \gamma^{(i)}_{j,K+1} \coloneqq \underset{\gamma_1, ..., \gamma_{K+1}}{\argmin}\sum_{m=1}^{M}\bigg\vert v^{(i)}_{j+1}(X^{(m)}_{j+1}) - \sum_{k=1}^{K} \gamma_k \psi_k(X^{(m)}_j) - \gamma_{K+1}v^{(i-1)}_{j+1}(X^{(m)}_j) \bigg\vert^{2}.
\end{equation*}
The procedure may be stopped after a fixed number of iterations, or using an adaptive criterion.
An illustration of the method can be found in Figure \ref{fig:adaptive_reinforced_regression}.
Note that the recursion that is started when evaluating $v^{(i)}_j(x)$ always terminates after at most $i$ steps in the evaluation of $v^{(0)}_{j+i}(x)$ for $i \le J-j$ or in $v^{(i-J-j)}_{J}(x)=g_J(x)$ for $J-j\le i$.

\begin{figure}[h]\centering
\includegraphics[scale=0.4]{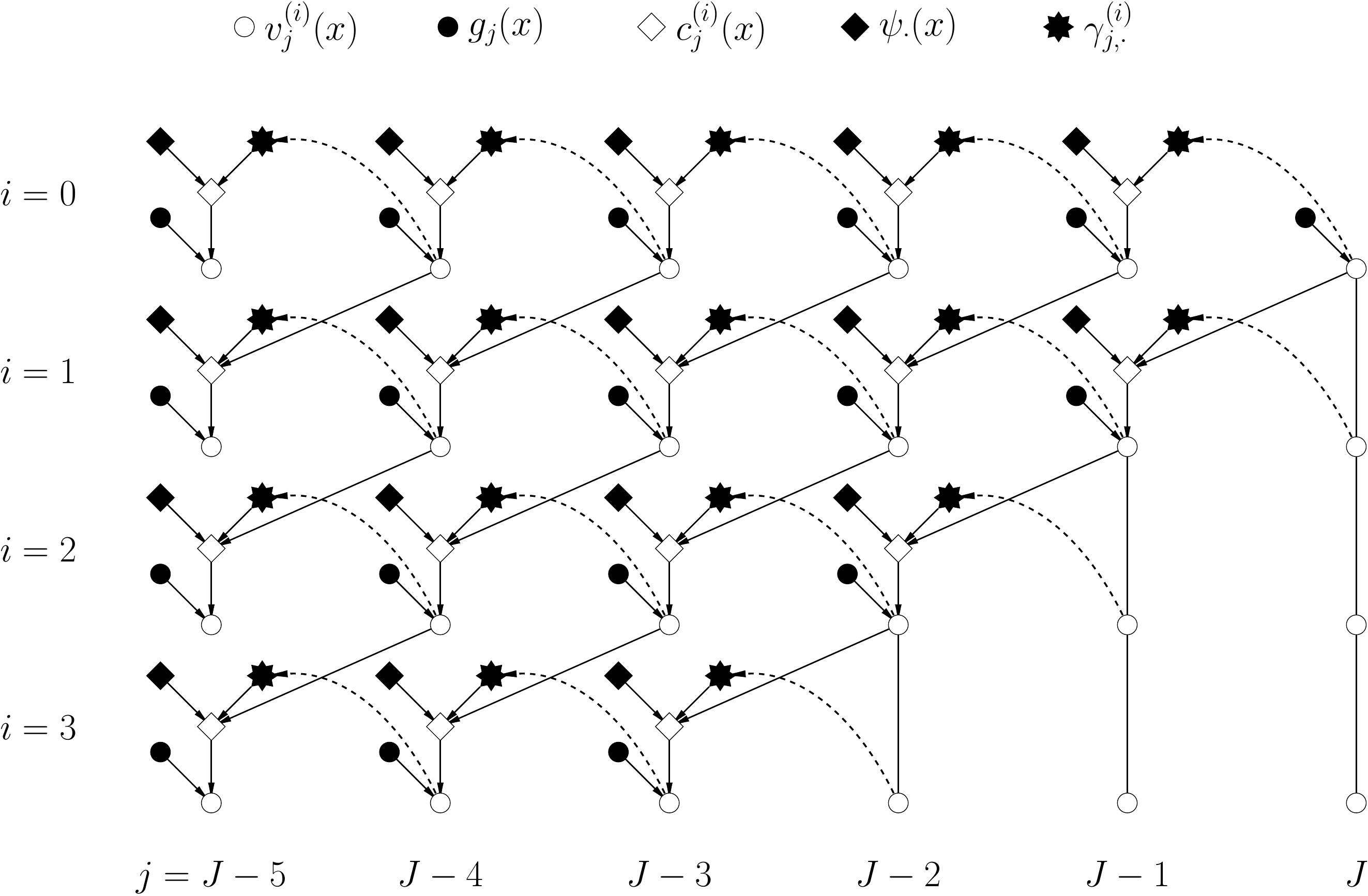}
\caption{Illustration of the hierarchical reinforced regression algorithm A, for three iterations.
In the lower right part of the diagram, the vertical lines indicate the equality $v^{(i)}_j \equiv v^{(l)}_j$ for $J-j \le i$.
}\label{fig:adaptive_reinforced_regression}
\end{figure}

For a fixed number of iterations $i \in \{0, ..., I\}$ we can modify the structure of the previous method so that the primary iteration is the backwards induction over $j \in \{ J, J-1, ..., 0\}$ and the secondary iteration is over $i \in \{ 0, ..., I\}$.
In this case we can further modify the algorithm by using $v^{(I)}_{j+1}$ as the regression target for the continuation functions $c^{(i)}_{j}$ for all $i \in \{0, ..., I\}$.
We name the resulting algorithm the \emph{hierarchical reinforced regression algorithm B}.
The approximate continuation function at step $j$ and iteration $i$ is then still given by \eqref{eq:optimal_stopping_adapted_continuation} and the least squares problem is given by
\begin{equation*}
\gamma^{(i)}_{j,1}, ..., \gamma^{(i)}_{j,K+1} \coloneqq \underset{\gamma_1, ..., \gamma_{K+1}}{\argmin}\sum_{m=1}^{M}\bigg\vert v^{(I)}_{j+1}(X^{(m)}_{j+1}) - \sum_{k=1}^{K} \gamma_k \psi_k(X^{(m)}_j) - \gamma_{K+1}v^{(i-1)}_{j+1}(X^{(m)}_j) \bigg\vert^{2}.
\end{equation*}
Also in this algorithm, the recursion that is started when evaluating $v^{(I)}_j$ stops after at most $I$ steps.
The costs of the algorithm are discussed in Section \ref{sec:computational-cost}.

\section{Iterated reinforced regression for optimal control}
\label{sec:altern-algor}

Following the ideas and motivations of Section~\ref{sec:reinf-regr} we now present hierarchical reinforced regression algorithms for optimal control based on the Bellman equation (\ref{eq:8}). The algorithms are based on  $M$ sample trajectories $(X^{(m)}_j)_{j=0, ..., J, m = 1, ..., M}$ from the underlying Markov chain $X$. and some initial set
 $\{\psi_1, ..., \psi_K\}$  of basis functions $\psi_i: \Rd \to \R$. For each $y\in\mathcal{L}$ we will define a subset $\mathcal{L}^{y} \subset \mathcal{L}$ of cardinality $R^{y} \coloneqq |\mathcal{L}^y|$ and reinforce the basis $\{\psi_1, \ldots, \psi_K\}$ by $\{v_{j+1}(z, \cdot) | z \in \mathcal{L}^y\}$.
The respective algorithms iteratively construct sequences of approximations to the value function
\begin{equation*}
v^{(i)} = (v^{(i)}_j)_{j=0,...,J} \quad \text{with} \quad  v^{(i)}_j : \mathcal{L} \times \Rd \to \R,
\end{equation*}
for $i=\{ 0, 1, ... \}$ until the iteration is terminated.

\subsection{Hierarchical reinforced regression algorithm A}\label{sec:adaptive_reinforced_regression}

For $i=0$ construct $v^{(0)}$ using the standard regression method inductively as follows:
At the terminal time $J$ initialize $v^{(0)}_J \coloneqq v_J$ where
\begin{equation}\label{eq:vJ_initialization}
v_J(y, x) = \max_{a \in K_J(y,x)} H_J(a, y, x), \quad\text{for all}\quad y\in\mathcal{L},\; x \in \Rd.
\end{equation}
For a $j\in\{0, ..., J-1\}$, assume that $v^{(0)}_l$ is already constructed for all $l \in \{j+1, ..., J\}$.
Then for each $y\in\mathcal{L}$ define the regression coefficients by solving the following least squares problem
\begin{equation}\label{eq:simple_least_squares}
\gamma^{(0), y}_{j,1}, ..., \gamma^{(0), y}_{j,K} \;\coloneqq\; \underset{\gamma_1, ..., \gamma_K}{\argmin}{\sum_{m=1}^{M}\left\vert v^{(0)}_{j+1}(y, X^{(m)}_{j+1}) - \sum_{k=1}^{K}\gamma_k\psi_k(X^{(m)}_j)\right\vert^{2}}.
\end{equation}
Next define the continuation function by
\begin{equation}\label{eq:simple_continuation_function}
c^{(0)}_j(y, x) \coloneqq \sum_{k=1}^{K} \gamma_{j,k}^{(0),y} \psi_k(x), \quad\text{for all}\quad y\in\mathcal{L},\; x \in \Rd
\end{equation}
and the approximate value function $v^{(0)}_j$ through the dynamic programming principle
\begin{equation}\label{eq:simple_value_function}
v^{(0)}_j(y,x) \coloneqq \max_{a \in K_j(y,x)} \Big(H_j(a, y, x) + c^{(0)}_j(\varphi_j(a, y), x)\Big) \quad\text{for all}\quad y\in\mathcal{L},\; x \in \Rd.
\end{equation}

Given the approximation $v^{(i)}$ for some $i\ge 0$ we construct a new approximation $v^{(i+1)}$ using reinforced regression inductively as follows: Initialize at the terminal time $v^{(i+1)}_J \coloneqq v_J$.
For $j \in \{0, ..., J-1\}$ assume that $v^{(i+1)}_l$ is already constructed for $l \in \{j+1, ..., J\}$.
Then for each $y \in \mathcal{L}$ define the regression coefficients by solving the following least squares problem
\begin{equation}\label{eq:adaptive_least_squres}
\begin{split}
\gamma^{(i+1), y}_{j,1}, ..., \gamma^{(i+1), y}_{j,K + R^{y}}\coloneqq \underset{\gamma_1, ..., \gamma_{K+R^{y}}}{\argmin}\sum_{m=1}^{M}\bigg\vert\; v^{(i+1)}_{j+1}(y, X^{(m)}_{j+1}) - \sum_{k=1}^{K}\gamma_k\psi_k(X^{(m)}_j) \\
- \sum_{k=1}^{R^{y}} \gamma_{K+k} v^{(i)}_{j+1}(y_k, X^{(m)}_j)\;\bigg\vert^{2},
\end{split}
\end{equation}
where $\{y_k\}_{k=1, ..., R^{y}} = \mathcal{L}^y$, and define the continuation function $c^{(i+1)}_j$ by
\begin{equation}\label{eq:reinfroced_continuation_function}
c^{(i+1)}_j(y, x) \coloneqq \sum_{k=1}^{K} \gamma^{(i+1),y}_{j,k} \psi_k(x) + \sum_{k=1}^{R^{y}} \gamma^{(i+1),y}_{j,K+k} v^{(i)}_{j+1}(y_k, x),
\end{equation}
for all $y\in\mathcal{L}$ and $x\in\Rd$.
Finally define the approximation $v_j^{(i+1)}$ through the dynamic programming principle by
\begin{equation}\label{eq:approximate_bellman_principle}
v^{(i+1)}_j(y,x) \coloneqq \max_{a \in K_j(y,x)} \Big(H_j(a, y, x) + c^{(i+1)}_j(\varphi_j(a, y), x)\Big),
\end{equation}
for all $y \in \mathcal{L}$ and $x \in \Rd$.

The iteration over $i\in\{0,1, ...\}$ can be terminated after $I\in\mathbb{N}$ steps, yielding $v^{(I)}$ as an approximation to the true value function.
Alternatively one can introduce an adaptive termination criterion, for example by comparing the relative change in the error of the least squares problem (\ref{eq:adaptive_least_squres}), terminating after the change falls under a given threshold.
\begin{remark}
Recall that in the initialization we have $v^{(i)}_J = v^{(0)}_J$ for all $i\in\{1, ..., I\}$.
It then follows inductively that
\begin{equation}\label{eq:identity_right_lower_triangle}
v^{(i)}_j \equiv v^{(l)}_j, \quad \text{ for all }\quad  J - j\le i \le I, \quad l \ge i.
\end{equation}
This identity can be used to reduce the costs of the algorithm, since the regression problem only needs to be solved for all $(j,i)$ with $0\le j\le J-1$ and $0 \le i \le (J-j)\wedge I$.
\end{remark}
\begin{remark}\label{rem:choice_of_Ly} More general or other forms  of reinforced basis functions are certainly possible.
The essential point is that they are based on the regression result from the preceding step in the backwards induction and  the preceding iteration.
Our specific choice may be seen as a natural primal choice.
We left flexibility in the choice of the sets $\mathcal{L}^y$, for which depending on the cardinality of the set $\mathcal{L}$, possible choices are the trivial $\mathcal{L}^y = \mathcal{L}$ and $\mathcal{L}^y = \{y\}$, or $\mathcal{L}^y = \mathcal{L}^\prime$ for some set $\mathcal{L}^\prime$ independent of $y$, or more elaborately $\mathcal{L}^y_{j} = \{\varphi(a,y) \;\vert\; a \in K_j(y,x_j)\}$ for some $x_j\in\Rd$.
Note that the use of a step dependent set $\mathcal{L}_{j}^y$ in the above method is straightforward.
\end{remark}

\subsection{Hierarchical reinforced regression algorithm B}\label{sec:modified_reinforced_regression}

Note that~\eqref{eq:simple_least_squares} and~\eqref{eq:adaptive_least_squres} are based on the approximate value functions $v^{(0)}_{j+1}$ and $v^{(i+1)}_{j+1}$, respectively, even though the more accurate approximation $v^{(I)}_{j+1}$ is already available at this point. Hence, we can potentially improve the algorithm's accuracy by always considering the most accurate approximation of the value function $v_{j+1}$ in the Bellman equation.

Fix a number of iterations $I \in \mathbb{N}$ and initialize the approximate value functions at the terminal time by $v^{(i)}_J \equiv v_J$ for all $i \in \{0, ..., I\}$, where $v_J$ is given by (\ref{eq:vJ_initialization}).
The approximate value functions at times previous to $J$ are defined inductively as follows:

Let $j \in \{0, ..., J-1\}$ and assume that $v^{(i)}_{l}$ is already defined for all $l \in \{ j+1, ..., J \}$ and $i\in \{ 0, ..., I\}$.
For $i=0$ and each $y\in\mathcal{L}$ determine the coefficients for the regression basis by solving the least squares problem
\begin{equation}\label{eq:step-0-modified}
\gamma^{(0), y}_{j,1}, ..., \gamma^{(0), y}_{j,K} \;\coloneqq\; \underset{\gamma_1, ..., \gamma_K}{\argmin}{\sum_{m=1}^{M}\left\vert v^{(I)}_{j+1}(y, X^{(m)}_{j+1}) - \sum_{k=1}^{K}\gamma_k\psi_k(X^{(m)}_j)\right\vert^{2}}
\end{equation}
and define the approximate continuation function $c^{(0)}$ by (\ref{eq:simple_continuation_function}).
For $i \in \{1, ..., I\}$ and each $y \in \mathcal{L}$ determine the regression coefficients by solving the least squares problem
\begin{multline}\label{eq:step-1-modified}
\gamma^{(i), y}_{j,1}, ..., \gamma^{(i), y}_{j,K + R^{y}}\coloneqq \underset{\gamma_1, ..., \gamma_{K+R^{y}}}{\argmin}\sum_{m=1}^{M}\bigg\vert\; v^{(I)}_{j+1}(y, X^{(m)}_{j+1}) - \sum_{k=1}^{K}\gamma_k\psi_k(X^{(m)}_j) \\
- \sum_{k=1}^{R^{y}} \gamma_{K+k} v^{(i-1)}_{j+1}(y_k, X^{(m)}_j)\;\bigg\vert^{2},
\end{multline}
where $\{y_k\}_{k=1, ..., R^{y}} = \mathcal{L}^y$, and define the continuation function $c^{(i)}_j$ by (\ref{eq:reinfroced_continuation_function}).

Finally, define the approximation to the value function $v^{(i)}_j$ for all $i =\{0, ..., I\}$ by (\ref{eq:approximate_bellman_principle}).
After ending the backwards induction use $(v^{(I)}_j)_{j=0,...,J}$ as an approximation to the true value function.

\begin{remark}
Note that the identity (\ref{eq:identity_right_lower_triangle}) also holds for the above algorithm.
Moreover, since we are only interested in $v^{(I)}$, we can discard the computation of $c^{(i)}_j$ and $v^{(i)}_j$ for all $0\le j + i \le I-1$, since they do not contribute to the construction of $v^{(I)}$.
The least squares problem then only needs to be solved for $(j,i)\in\{0, ..., J-1\}\times\{0, ..., I\}$ with $0\le j + i \le I-1$ and $0 \le i \le (J-j)\wedge I.$
\end{remark}
\begin{remark}\label{rmk:direct_extension_rr}
Choosing the number of iterations $I = J$ we then have from the previous remark that only the value functions on the diagonal $j = i$ need to be constructed.
In this case, denote $\vmc_j = v^{(j)}_j$, $\cmc_j = c^{(j)}_j$ etc., and observe that the least squares problem which is solved in each step $j \in \{J-1, ..., 0\}$ of the backwards induction is given by
\begin{equation*}
\gamma^{y}_{j,1}, ..., \gamma^{y}_{j, K + R^{y}}\coloneqq \underset{\gamma_1, ..., \gamma_{K+1}}{\argmin}\sum_{m=1}^{M}\bigg\vert\; \vmc_{j+1}(y, X^{(m)}_{j+1}) - \sum_{k=1}^{K}\gamma_k\psi_k(X^{(m)}_j) - \sum_{k=1}^{R^{y}} \gamma_{K+k} \vmc_{j+1}(y_k, X^{(m)}_j)\;\bigg\vert^{2}.
\end{equation*}
where $\{y_k\}_{k=1, ..., R^{y}} = \mathcal{L}^y$.
Hence, for $I=J$ the above algorithm represents a direct extension of the \emph{reinforced regression algorithm} in \cite{belomestny2020optimal} from optimal stopping to optimal control problems.
\end{remark}

\section{Computational cost}
\label{sec:computational-cost}

We study the computational work of the modified reinforced regression
algorithm of Section~\ref{sec:modified_reinforced_regression}. In what follows, the following
operations are considered to be performed at \emph{constant} cost:
\begin{itemize}
\item Multiplications, additions and other primitive operations at cost
  $c_\ast$;
\item Simulation from the distribution of the Markov process $X_j$ at cost
  $c_X$;
\item Evaluation of the standard basis functions $\psi_i$ or of the payoff
  $H_j$ at cost $c_f$;
\end{itemize}

We furthermore introduce the following notations:
\begin{itemize}
\item We set $R \coloneqq \max_{y \in \mathcal{L}} R^y$.
\item The cost of evaluating other non-trivial, but known functions $\varphi$ (think of the value
  function when all the required regression coefficients are already known)
  will be denoted by $\cost(\varphi)$.
\end{itemize}

If an expression involves several such operations, then only the most
expensive constant is reported. (E.g., evaluating a basis function and
multiplying the value by a scalar constant is considered to incur a cost
$c_f$.) We may also use constants $c$ which do not depend on the specifics of
the algorithm. We now go through the individual stages of the algorithm.
\begin{stages}
  \label{stag:cost-stages}
  \begin{enumerate}
  \item Simulating trajectories at cost $\cost_1 = c_X M (J+1)$.
  \item Computing the terminal value function as
    in~\eqref{eq:vJ_initialization} for a given $x\in\Rd$ and all $y\in\mathcal{L}$ at cost
    $\cost_2 = c_f \abs{\mathcal{L}} \abs{\mathcal{K}}$.
  \item For fixed $0 \le j \le J-1$ and $y \in \mathcal{L}$ set up the least
    squares problem~\eqref{eq:step-0-modified} at cost
    $M\left( c_f K + \cost\left( v^{(I)}_{j+1}
      \right)\right)$.\label{item:step-0}
  \item For fixed $0 \le j \le J-1$ and $y \in \mathcal{L}$, we solve the
    least squares problem~\eqref{eq:step-0-modified} at cost $c_\ast M
    K^2$. The total cost is $\cost_4 = c_\ast J M K^2 \abs{\mathcal{L}}$.
  \item For fixed $0 \le j \le J-1$, $y \in \mathcal{L}$, and $1 \le i \le I$
    set up the least squares problem~\eqref{eq:step-1-modified} at cost
    $M\left( c_f K + \cost\left( v^{(I)}_{j+1} \right) + R \cost\left(
        v_{j+1}^{(i-1)} \right)\right)$.\label{item:step-1}
  \item For fixed $0 \le j \le J-1$, $y \in \mathcal{L}$, and $1 \le i \le I$
    solve the least squares problem~\eqref{eq:step-1-modified} at cost
    $c_\ast M (K+R)^2$, leading to a total cost of $\cost_6 = c_\ast M (K+R)^2 J \abs{\mathcal{L}}$.
  \end{enumerate}
  \caption{Stages of the algorithm}
\end{stages}

For simplicity of the presentation, we shall only consider the following scenario:
\begin{assumption}
  \label{ass:L=R}
  The total set of reinforced basis functions contains all available value
  functions, i.e., $\bigcup_{y \in \mathcal{L}} \mathcal{L}^y = \mathcal{L}.$
\end{assumption}

For fixed $0 \le i \le I$ and $0 \le j \le J$ let
\begin{equation}
  \label{eq:vector-values}
  \mathbf{v}^{(i)}_j \coloneqq \left( v^{(i)}_j(y,\cdot) \right)_{y \in
    \mathcal{L}}, \quad \mathbf{c}^{(i)}_j \coloneqq \left( c^{(i)}_j(y,\cdot)
  \right)_{y \in \mathcal{L}}.
\end{equation}
The key step of the cost analysis is understanding the cost of evaluating the
reinforced basis functions, which are, in turn, given in terms of reinforced
basis functions at later time steps. We note that it is essential to analyze
the cost of evaluating the full set of reinforced basis functions
$\mathbf{v}^{(i)}_j$ rather than individual ones $v^{(i)}_j(y, \cdot)$, as the
latter method would show us an apparent explosion of basis functions as we
increase time.\footnote{Suppose that each reinforced basis function
  $v^{(i)}_j(y,\cdot)$ depends on two reinforced basis functions
  $v^{(i-1)}_{j+1}(y', \cdot)$ and $v^{(i-1)}_{j+1}(y'',\cdot)$. If we follow
  this recursion for $l \le i$ steps, we arrive at a total set of $2^l$ basis
  functions. The catch is that many, if not all, of these basis functions
  overlap with basis functions for other reinforced basis functions
  $v^{(i)}_j(\tilde{y}, \cdot)$.}
By~\eqref{eq:approximate_bellman_principle}, evaluating $\mathbf{v}^{(i)}_j$ requires evaluating the payoff functions for all combinations of controls $y \in \mathcal{L}$ and policies $a \in \mathcal{K}$,
then evaluating $\mathbf{c}^{(i)}_j$, and taking the corresponding maxima. In
total, this means
\begin{equation*}
  \cost\left( \mathbf{v}^{(i)}_j \right) \le \abs{\mathcal{K}}
  \abs{\mathcal{L}} (c_f + c_\ast) + \cost\left( \mathbf{c}^{(i)}_j \right).
\end{equation*}
On the other hand, by~\eqref{eq:reinfroced_continuation_function}
evaluating $\mathbf{c}^{(i)}_j$ requires $K$ evaluations of standard basis
functions, $K\abs{\mathcal{L}}$ elementary operations for summing them, one
evaluation of $\mathbf{v}^{(i-1)}_{j+1}$, and $\abs{\mathcal{L}}^2$ elementary
operations for their summation. In total, this means that
\begin{equation*}
  \cost\left( \mathbf{c}^{(i)}_j \right) \le K c_f + K \abs{\mathcal{L}}
  c_\ast + \indic{i>0} \left( \abs{\mathcal{L}}^2 c_\ast + \cost\left(
      \mathbf{v}^{(i-1)}_{j+1} \right) \right).
\end{equation*}
This implies the cost estimate
\begin{equation}
  \label{eq:cost-recursion-vector}
  \cost\left( \mathbf{v}^{(i)}_j \right) \le \abs{\mathcal{K}}
  \abs{\mathcal{L}} (c_f + c_\ast) + K c_f + K \abs{\mathcal{L}}
  c_\ast + \indic{i>0} \left( \abs{\mathcal{L}}^2 c_\ast + \cost\left(
      \mathbf{v}^{(i-1)}_{j+1} \right) \right).
\end{equation}

\begin{lemma}
  \label{lem:cost-reinforced-basis}
  The cost of evaluating $\mathbf{v}^{(i)}_j$, $i=0, \ldots, I$, $j=0, \ldots,
  J$ can be bounded by
  \begin{equation*}
    \cost\left( \mathbf{v}^{(i)}_j \right) \le
    \begin{cases}
      (i+1) \left( \abs{\mathcal{K}} \abs{\mathcal{L}} (c_f+c_\ast) + Kc_f + K
      \abs{\mathcal{L}} c_\ast\right) + i \abs{\mathcal{L}}^2 c_\ast, & j+i
    \le J,\\
    \abs{\mathcal{L}} \abs{\mathcal{K}} (c_f + c_\ast) + (J-j) \left(
      \abs{\mathcal{K}} \abs{\mathcal{L}} (c_f+c_\ast) + Kc_f + (K+1)
      \abs{\mathcal{L}} c_\ast \right), & j+i>J.
    \end{cases}
  \end{equation*}
\end{lemma}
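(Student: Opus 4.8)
The plan is to prove both bounds simultaneously by unrolling the cost recursion~\eqref{eq:cost-recursion-vector}, keeping careful track of where the recursion bottoms out. Abbreviate $\alpha \coloneqq \abs{\mathcal{K}}\abs{\mathcal{L}}(c_f+c_\ast) + Kc_f + K\abs{\mathcal{L}}c_\ast$ and $\beta \coloneqq \abs{\mathcal{L}}^2 c_\ast$, so that~\eqref{eq:cost-recursion-vector} reads $\cost(\mathbf{v}^{(i)}_j) \le \alpha + \indic{i>0}\bigl(\beta + \cost(\mathbf{v}^{(i-1)}_{j+1})\bigr)$. The key observation is that each application of the recursion decreases the iteration level $i$ by one while increasing the time index $j$ by one; hence, starting from $(i,j)$, the recursion must terminate either when the level reaches $0$ (after $i$ steps) or when the time index reaches $J$ (after $J-j$ steps), whichever comes first. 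These two termination modes correspond precisely to the two cases $j+i\le J$ and $j+i>J$.

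First I would treat the regime $j+i\le J$. Here the level reaches $0$ first, so the recursion can be iterated $i$ times with the indicator active, the time index never exceeding $j+i\le J$ along the way. Each of these $i$ steps contributes $\alpha+\beta$, and the terminal step at level $0$ contributes the base value $\alpha$ from the $i=0$ branch of~\eqref{eq:cost-recursion-vector}. Summing yields $\cost(\mathbf{v}^{(i)}_j)\le i(\alpha+\beta)+\alpha = (i+1)\alpha + i\beta$, which is exactly the first case of the claimed bound. (At the corner $j+i=J$ the level-$0$ term could be replaced by the cheaper terminal cost, but retaining $\alpha$ keeps the bound valid and uniform.)

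Next I would treat the regime $j+i>J$. By the identity~\eqref{eq:identity_right_lower_triangle} we may replace $\mathbf{v}^{(i)}_j$ by $\mathbf{v}^{(J-j)}_j$ at no cost, since the two functions coincide. Unrolling the recursion from level $J-j$ now makes the time index reach $J$ exactly when the level reaches $0$, so the recursion bottoms out at the terminal value function $\mathbf{v}^{(0)}_J=\mathbf{v}_J$. Crucially, the base case is then not a generic level-$0$ continuation of cost $\alpha$, but the terminal value function of~\eqref{eq:vJ_initialization}, whose evaluation over all $y\in\mathcal{L}$ costs only $\abs{\mathcal{L}}\abs{\mathcal{K}}(c_f+c_\ast)$. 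With $J-j$ active steps contributing $\alpha+\beta$ each, this gives $\cost(\mathbf{v}^{(i)}_j)\le (J-j)(\alpha+\beta)+\abs{\mathcal{L}}\abs{\mathcal{K}}(c_f+c_\ast)$. The leading constant $\abs{\mathcal{L}}\abs{\mathcal{K}}(c_f+c_\ast)$ matches the stated second case exactly and is the signature of the recursion terminating at the cheap terminal layer rather than at $\alpha$; the per-step term $\alpha+\beta$ agrees with the stated $\abs{\mathcal{K}}\abs{\mathcal{L}}(c_f+c_\ast)+Kc_f+(K+1)\abs{\mathcal{L}}c_\ast$ modulo the summation term discussed next.

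The main obstacle I anticipate is the bookkeeping of the reinforcing summation in the per-step constant of the second case. A naive reading of~\eqref{eq:cost-recursion-vector} attaches $\beta=\abs{\mathcal{L}}^2 c_\ast$ to each step, whereas the stated bound carries only $(K+1)\abs{\mathcal{L}}c_\ast$, i.e.\ an $\abs{\mathcal{L}}c_\ast$ rather than an $\abs{\mathcal{L}}^2 c_\ast$ summation term. Reconciling these requires revisiting the summation step in the derivation of~\eqref{eq:cost-recursion-vector} in the upper-triangular regime, where by~\eqref{eq:identity_right_lower_triangle} the reinforcing vector $\mathbf{v}^{(i-1)}_{j+1}$ already lies on the diagonal; I would verify there whether forming $\mathbf{c}^{(i)}_j$ from it indeed incurs the reduced summation cost, and otherwise carry the $\abs{\mathcal{L}}^2 c_\ast$ term through, which still yields a valid upper bound of the same order. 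Apart from this point, the argument is a routine telescoping of the recursion, and the two cases glue consistently along the diagonal $j+i=J$ via~\eqref{eq:identity_right_lower_triangle}.
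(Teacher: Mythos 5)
Your proof is correct and follows essentially the same route as the paper: both unroll the recursion~\eqref{eq:cost-recursion-vector} and split according to whether the unrolling bottoms out at level $i=0$ (base cost $a$, giving $(i+1)a+i\abs{\mathcal{L}}^2c_\ast$) or at the terminal time $j=J$ (base cost $\abs{\mathcal{L}}\abs{\mathcal{K}}(c_f+c_\ast)$ from~\eqref{eq:vJ_initialization}). The $\abs{\mathcal{L}}^2c_\ast$ versus $(K+1)\abs{\mathcal{L}}c_\ast$ discrepancy you flag in the second case is not a defect of your argument but an inconsistency in the statement itself: the paper's own proof merely asserts that the second-case sequence ``satisfies the same recursion,'' which likewise yields a per-step increment of $a+\abs{\mathcal{L}}^2c_\ast$ rather than the stated $e=a+\abs{\mathcal{L}}c_\ast$, so your decision to carry the larger term (which only affects constants, not the order of the cost estimate) is the safe and defensible one.
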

\begin{proof}
  For $a \coloneqq \abs{\mathcal{K}} \abs{\mathcal{L}} (c_f+c_\ast) + Kc_f + K
  \abs{\mathcal{L}} c_\ast$, consider the cost recursion
  \begin{equation*}
    c(k+1) \le a + \abs{\mathcal{L}} c_\ast c(k), \quad k \ge 0.
  \end{equation*}
  Assuming that the recursion hits $i=0$ before $j=J$, i.e., $i+j \le J$, the
  cost $c(k) \coloneqq \cost\left( \mathbf{v}_{j+i-k}^{(k)} \right)$ satisfies
  the recursion with $c(0) \le a$, and, hence, we obtain
  \begin{equation*}
    c(k) \le (k+1) a + k \abs{\mathcal{L}}^2 c_\ast.
  \end{equation*}
  This gives the first expression in the statement of the lemma with $k=i$.

  On the other hand, if $i+j>J$, we hit $j=J$ before $i=0$. In this case,
  $c(k) \coloneqq \cost\left( \mathbf{v}^{(i+j-J+k)}_{J-k} \right)$ satisfies
  the same recursion, but with initial value $c(0) \le \abs{\mathcal{L}}
  \abs{\mathcal{K}} (c_f+c_\ast)$.
\end{proof}

In order to shorten notation, we introduce
\begin{eqnarray*}
  a &\coloneqq& \abs{\mathcal{K}} \abs{\mathcal{L}} (c_f+c_\ast) + Kc_f + K
      \abs{\mathcal{L}} c_\ast,\\
  b &\coloneqq& \abs{\mathcal{L}}^2 c_\ast,\\
  d &\coloneqq& \abs{\mathcal{L}} \abs{\mathcal{K}} (c_f + c_\ast),\\
  e &\coloneqq& \abs{\mathcal{K}} \abs{\mathcal{L}} (c_f+c_\ast) + Kc_f + (K+1)
      \abs{\mathcal{L}} c_\ast,
\end{eqnarray*}
so that the estimate of Lemma~\ref{lem:cost-reinforced-basis} shortens to
\begin{equation*}
  \cost\left( \mathbf{v}^{(i)}_j \right) \le
  \begin{cases}
    (i+1) a + i b, & j+i
    \le J,\\
    d + (J-j) e, & j+i>J.
  \end{cases}
\end{equation*}
We next estimate the cost of setting up the regression
problem~\eqref{eq:step-0-modified}, which is proved similarly.
\begin{lemma}
  \label{lem:cost-step-0-modified}
  The cost of setting up the regression problem for $c^{(0)}_{j}(y, \cdot)$,
  $j=0, \ldots, J-1$, $y \in \mathcal{L}$, can be bounded by
  \begin{equation*}
    \cost_3 \le JMK c_f + M(J-I)\left( (I+1) a + Ib \right) + MId + \half MI(I+1) e.
  \end{equation*}
\end{lemma}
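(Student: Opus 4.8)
The plan is to reduce the cost of setting up all the regression problems~\eqref{eq:step-0-modified} to a single sum of the evaluation costs $\cost(\mathbf{v}^{(I)}_{j+1})$ already controlled by Lemma~\ref{lem:cost-reinforced-basis}, and then to evaluate that sum explicitly. First I would fix $0 \le j \le J-1$ and observe that setting up~\eqref{eq:step-0-modified} simultaneously for all $y \in \mathcal{L}$ splits into two pieces. The design matrix has entries $\psi_k(X^{(m)}_j)$, which are independent of $y$; hence it is assembled only once per time step $j$ at cost $MKc_f$, rather than once per pair $(j,y)$. This is the point at which a spurious factor $\abs{\mathcal{L}}$ on the standard-basis term is avoided. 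The right-hand sides, on the other hand, are the vectors $\bigl(v^{(I)}_{j+1}(y, X^{(m)}_{j+1})\bigr)_{y \in \mathcal{L}}$, $m=1,\dots,M$, i.e.\ exactly $M$ evaluations of the full vector-valued quantity $\mathbf{v}^{(I)}_{j+1}$ from~\eqref{eq:vector-values}. Thus the cost at the fixed step $j$ is bounded by $MKc_f + M\,\cost\bigl(\mathbf{v}^{(I)}_{j+1}\bigr)$, and summing over $j=0,\dots,J-1$ (reindexing $n=j+1$) gives
\begin{equation*}
  \cost_3 \le J M K c_f + M \sum_{n=1}^{J} \cost\bigl(\mathbf{v}^{(I)}_n\bigr).
\end{equation*}

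Next I would insert the bound of Lemma~\ref{lem:cost-reinforced-basis} with iteration index $i=I$, splitting the sum according to its two regimes. The first case $n+I \le J$ holds precisely when $n \le J-I$ and contributes $(I+1)a + Ib$ for each of the $J-I$ such indices; the second case $n+I > J$ holds for $n = J-I+1, \dots, J$ and contributes $d + (J-n)e$. Hence
\begin{equation*}
  \sum_{n=1}^{J} \cost\bigl(\mathbf{v}^{(I)}_n\bigr) \le (J-I)\bigl((I+1)a + Ib\bigr) + I d + e \sum_{n=J-I+1}^{J} (J-n).
\end{equation*}

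Finally I would evaluate the remaining arithmetic sum: substituting $k = J-n$ turns it into $\sum_{k=0}^{I-1} k = \tfrac{1}{2}I(I-1)$, which I would bound from above by $\tfrac{1}{2}I(I+1)$ to match the stated closed form; multiplying through by $M$ and adding back the $JMKc_f$ term then yields exactly the claimed inequality. The argument is essentially bookkeeping, and I do not expect any serious obstacle beyond keeping the boundary index $n = J-I$ in the correct regime. The only genuinely substantive step — the one where the preceding discussion pays off — is the very first reduction: recognizing that the design matrix is reused across all controls $y$, so that the $c_f K$ contribution carries no $\abs{\mathcal{L}}$ factor, and that the regression targets for all $y$ are produced jointly by the single vector $\mathbf{v}^{(I)}_{j+1}$ whose cost is already governed by Lemma~\ref{lem:cost-reinforced-basis} (under Assumption~\ref{ass:L=R}). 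This avoids the apparent explosion of basis functions warned about before the lemma and is what keeps $\cost_3$ linear, rather than quadratic, in $\abs{\mathcal{L}}$ outside of the constants $a,b,d,e$.
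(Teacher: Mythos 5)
Your proof is correct and follows essentially the same route the paper intends: the paper omits an explicit proof of this lemma (stating only that it ``is proved similarly''), but its proof of Lemma~\ref{lem:cost-step-1-modified} uses exactly your decomposition, namely bounding $\cost_3 \le \sum_{j=0}^{J-1} M\bigl(Kc_f + \cost\bigl(\mathbf{v}^{(I)}_{j+1}\bigr)\bigr)$ and then splitting the sum according to the two regimes of Lemma~\ref{lem:cost-reinforced-basis}. Your bookkeeping is accurate (including the slight over-bound $\tfrac12 I(I-1)\le\tfrac12 I(I+1)$ needed to reach the stated closed form), and your emphasis on the vector-valued evaluation $\mathbf{v}^{(I)}_{j+1}$ avoiding an extra $\abs{\mathcal{L}}$ factor is precisely the point the paper makes in the footnote preceding Lemma~\ref{lem:cost-reinforced-basis}.
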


The cost for setting up the least squares problem~\eqref{eq:step-1-modified}
is computed in a similar way.
\begin{lemma}
  \label{lem:cost-step-1-modified}
  The cost of setting up the regression problem for $c^{(i)}_{j}(y, \cdot)$,
  $i=1, \ldots, I$, $j=0, \ldots, J-1$, $y \in \mathcal{L}$, can be bounded by
  \begin{multline*}
    \cost_5 \le JMKc_f + \frac{M}{2} I \left[(I+1)a + (I-1)b\right] (J-I+2) +\\
    + \frac{M}{6} I \left[ 11 a + 2b - 9d +5e + I(I+6)a + 3I(I+b) + 3I^2d +
      I(I+6) e \right].
  \end{multline*}
\end{lemma}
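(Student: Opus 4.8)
The plan is to mirror the proofs of Lemma~\ref{lem:cost-reinforced-basis} and Lemma~\ref{lem:cost-step-0-modified}: decompose the total cost of setting up all the regression problems~\eqref{eq:step-1-modified} into a sum of evaluation costs, insert the two-regime per-step bound from Lemma~\ref{lem:cost-reinforced-basis}, split the resulting double sum according to whether $j+i\le J$ or $j+i>J$, and finally evaluate the arithmetic sums in closed form.

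First I would fix $1 \le i \le I$ and $0 \le j \le J-1$ and, working under Assumption~\ref{ass:L=R}, pass to the vector-valued functions $\mathbf{v}^{(i)}_j$ of~\eqref{eq:vector-values}. Setting up~\eqref{eq:step-1-modified} simultaneously for all $y \in \mathcal{L}$ at the $M$ sample points then requires three kinds of evaluations: the $K$ standard basis functions $\psi_k(X^{(m)}_j)$, which are independent of both $y$ and $i$; the common regression target $\mathbf{v}^{(I)}_{j+1}(X^{(m)}_{j+1})$, independent of $i$; and the reinforcing functions $\mathbf{v}^{(i-1)}_{j+1}(X^{(m)}_j)$, which by Assumption~\ref{ass:L=R} exhaust $\mathcal{L}$ and are hence evaluated once as a block. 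Caching the two $i$-independent evaluations across the inner iteration, the total cost decomposes as
\begin{equation*}
  \cost_5 \le JMKc_f + M\sum_{j=0}^{J-1}\cost\left(\mathbf{v}^{(I)}_{j+1}\right) + M\sum_{i=1}^{I}\sum_{j=0}^{J-1}\cost\left(\mathbf{v}^{(i-1)}_{j+1}\right),
\end{equation*}
where the first term is immediate and accounts for $JMKc_f$.

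Next I would insert the bound of Lemma~\ref{lem:cost-reinforced-basis} into the remaining two sums. For the target sum this is exactly the computation performed in Lemma~\ref{lem:cost-step-0-modified}, splitting the range at $j=J-I$. For the reinforcing sum, $\cost(\mathbf{v}^{(i-1)}_{j+1})$ lies in the regime $(j+1)+(i-1)\le J$, i.e.\ $j+i\le J$, where it is bounded by $ia+(i-1)b$, and otherwise by $d+(J-j-1)e$; hence for each fixed $i$ the inner sum over $j$ splits into $(J-i+1)$ upper-left terms and $(i-1)$ lower-right terms, giving $(J-i+1)(ia+(i-1)b) + (i-1)d + \frac{1}{2}(i-1)(i-2)e$. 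Summing over $i=1,\ldots,I$ with the standard formulas for $\sum i$, $\sum i^2$ and $\sum i^3$ yields polynomials of degree at most three in $I$, and regrouping the leading $J$-proportional part separately from the remaining cubic contributions produces the stated bound.

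The main obstacle is purely the combinatorial bookkeeping of the reinforcing double sum: its summand depends on both indices, and the diagonal boundary $j+i=J$ between the two regimes of Lemma~\ref{lem:cost-reinforced-basis} cuts across the summation rectangle, so the ranges of the inner $j$-sum shift with $i$. Some care is needed with the off-by-one at this boundary and with correctly isolating the cubic-in-$I$ terms; once the regime split is set up, the evaluation of the individual sums is routine.
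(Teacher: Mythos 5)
Your overall strategy is the same as the paper's: bound the total set-up cost by a double sum of $\cost\bigl(\mathbf{v}^{(i-1)}_{j+1}\bigr)$ over $(i,j)$, insert the two-regime estimate of Lemma~\ref{lem:cost-reinforced-basis}, split the summation rectangle along the diagonal $j+i=J$, and evaluate the resulting arithmetic sums; your identification of the regimes and of the inner sums $(J-i+1)\bigl(ia+(i-1)b\bigr)+(i-1)d+\tfrac12(i-1)(i-2)e$ is consistent with the paper's computation (up to harmless boundary conventions).

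There is, however, one concrete discrepancy that prevents your argument from yielding the stated formula. You include the term $M\sum_{j=0}^{J-1}\cost\bigl(\mathbf{v}^{(I)}_{j+1}\bigr)$ for evaluating the regression target. The paper's $\cost_5$ deliberately omits this term: the target values $v^{(I)}_{j+1}(y,X^{(m)}_{j+1})$ are exactly the ones already computed when setting up~\eqref{eq:step-0-modified}, i.e.\ they are accounted for in $\cost_3$ (Lemma~\ref{lem:cost-step-0-modified}) and need not be evaluated again. The paper's starting point is therefore
\begin{equation*}
  \cost_5 \le \sum_{j=0}^{J-1} M \Bigl( K c_f + \sum_{i=1}^I \cost\bigl(\mathbf{v}^{(i-1)}_{j+1}\bigr) \Bigr),
\end{equation*}
with no target contribution. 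With your decomposition the final bound acquires an additional summand of the order of $M(J-I)\bigl((I+1)a+Ib\bigr)+MId+\tfrac12 MI(I+1)e$ (the non-$JMKc_f$ part of Lemma~\ref{lem:cost-step-0-modified}), so as written you would prove a strictly weaker inequality than the one claimed. The fix is simply to observe that the caching operates across stages, not only across the inner $i$-loop (which you do handle), and to drop the middle term of your decomposition; the remainder of your computation then goes through as in the paper.
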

\begin{proof}
  A closer look at~\eqref{eq:step-1-modified} reveals that the total cost of setting
  up all these least squares problems can be bounded by
  \begin{equation}\label{eq:cost-5-sum}
    \cost_5 \le \sum_{j=0}^{J-1} M \left( K c_f + \sum_{i=1}^I \cost\left(
        \mathbf{v}^{(i-1)}_{j+1} \right) \right),
  \end{equation}
  taking into account that $\mathbf{v}^{(I)}_{j+1}$ was already evaluated
  during the set-up of the least squares problem~\eqref{eq:step-0-modified}
  and, hence, does not need to be evaluated again. Using
  Lemma~\ref{lem:cost-reinforced-basis}, we obtain
  \begin{align*}
    \cost_5 &\le JMKc_f + M \sum_{j=0}^{J-1} \left[
              \sum_{i=0}^{(J-j)\wedge(I-1)} \left( (i+1)a +ib \right) +
              \sum_{i=1+(J-j)\wedge(I-1)}^{I-1} \left(d + (J-j-1)e \right)
              \right] \\
    &= JMKc_f + M \sum_{j=0}^{J-I+1} \left[ \sum_{i=0}^{I-1} \left( (i+1)a +
      ib \right) \right] + \\
    &\quad\quad+ M \sum_{j=J-I}^{J-1} \left[ \sum_{i=0}^{J-j} \left((i+1)a +
      ib\right) + \sum_{i=J-j+1}^{I-1} \left(d + (J-j-1)e \right) \right].
  \end{align*}
  Evaluating the double sums gives the estimate from the statement of the lemma.
\end{proof}

Abandoning the difference between $c_f$ and $c_\ast$ using the trivial bounds $\cost_3 \le \const \cost_5$, $\cost_4 \le \const \cost_6$, we obtain
\begin{theorem}
  \label{thr:cost-analysis}
  The computational cost of the  algorithm presented in
  Section~\ref{sec:modified_reinforced_regression} can be bounded by
  \begin{equation*}
    \cost \le \const MJ \left( c_X  + I^2 (K + \abs{\mathcal{K}} +
      \abs{\mathcal{L}}) \abs{\mathcal{L}} + (K+R)^2 \abs{\mathcal{L}}\right),
  \end{equation*}
  where $\const$ is a positive number independent of $\abs{\mathcal{K}}$,
  $\abs{\mathcal{L}}$, $K$, $J$, and $I$.
\end{theorem}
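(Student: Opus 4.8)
The plan is to assemble the total cost as the sum of the six stage costs identified in List~\ref{stag:cost-stages}, namely $\cost = \cost_1 + \cost_2 + \cost_3 + \cost_4 + \cost_5 + \cost_6$, and then to show that after discarding lower-order contributions the surviving terms match the three summands in the claimed bound. Using the trivial bounds $\cost_3 \le \const \cost_5$ and $\cost_4 \le \const \cost_6$ recorded in the text (the first because setting up the $i\ge 1$ least squares problems is at least as expensive as the $i=0$ ones, the second because $(K+R)^2 \ge K^2$), it suffices to bound $\cost_1$, $\cost_2$, $\cost_5$, and $\cost_6$. Throughout I would abandon the distinction between $c_f$ and $c_\ast$, absorbing both into the generic constant $\const$.

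The terms $\cost_1$, $\cost_2$, and $\cost_6$ are immediate. We have $\cost_1 = c_X M (J+1) \le \const M J c_X$, which produces the $c_X$ summand; $\cost_6 = c_\ast M (K+R)^2 J \abs{\mathcal{L}}$ already has the shape of the $(K+R)^2 \abs{\mathcal{L}}$ summand; and $\cost_2 = c_f \abs{\mathcal{L}} \abs{\mathcal{K}}$ carries no factor of $MJ$ and is therefore dominated by the $I^2$ term. The real work lies in $\cost_5$, and the key preliminary observation is that the four constants of Lemma~\ref{lem:cost-reinforced-basis} collapse to two orders once $c_f$ and $c_\ast$ are identified: directly from their definitions one checks $a, d, e \le \const (K + \abs{\mathcal{K}}) \abs{\mathcal{L}}$ and $b \le \const \abs{\mathcal{L}}^2$, so that $a + b + d + e \le \const (K + \abs{\mathcal{K}} + \abs{\mathcal{L}}) \abs{\mathcal{L}}$, which is precisely the factor multiplying $I^2$ in the theorem.

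It then remains to show $\cost_5 \le \const M J I^2 (K + \abs{\mathcal{K}} + \abs{\mathcal{L}})\abs{\mathcal{L}}$ up to terms dominated by $\cost_6$. Starting from the double-sum bound~\eqref{eq:cost-5-sum} and inserting Lemma~\ref{lem:cost-reinforced-basis}, the ``bulk'' region $j + i \le J$ contributes $\sum_{j}\sum_{i=1}^I \const\, i (a+b)$, which sums to an expression of order $M J I^2 (a+b)$, already of the desired form. The delicate point is the ``boundary'' region $j + i > J$: it is nonempty for at most $I$ values of $j$, each contributing a quantity of order $I^2(a+b+d+e)$, so the boundary produces a term of order $M I^3 (a+b+d+e)$ that is \emph{cubic} rather than quadratic in $I$. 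This mismatch is the main obstacle. It is resolved by the natural constraint $I \le J$, which is harmless because the recursion depth gains nothing beyond the number of time steps, by the redundancy identity~\eqref{eq:identity_right_lower_triangle}; under $I \le J$ one has $M I^3 \le M J I^2$ and the boundary term is reabsorbed into the bulk estimate. Finally, the residual $J M K c_f$ appearing in Lemma~\ref{lem:cost-step-1-modified} is bounded by $\const \cost_6$, and collecting the three surviving orders yields the stated bound, with $\const$ manifestly independent of $\abs{\mathcal{K}}$, $\abs{\mathcal{L}}$, $K$, $J$, and $I$.
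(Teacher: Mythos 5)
Your proof is correct and takes essentially the same route as the paper, which obtains the theorem by summing the stage costs of List~\ref{stag:cost-stages}, invoking Lemmas~\ref{lem:cost-reinforced-basis}--\ref{lem:cost-step-1-modified} together with the trivial bounds $\cost_3 \le \const \cost_5$ and $\cost_4 \le \const \cost_6$, and identifying $c_f$ with $c_\ast$. Your explicit treatment of the cubic-in-$I$ boundary contribution via the constraint $I \le J$ fills in a step the paper leaves implicit; that constraint is indeed harmless because of the identity~\eqref{eq:identity_right_lower_triangle}, which caps the useful recursion depth.
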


\begin{remark}
  \label{rem:cost-J=I}
  Recall that Remark~\ref{rmk:direct_extension_rr} introduced a significantly
  cheaper variant of  algorithm B for the case
  $I = J$. It is easy to see that the computational cost of this variant is
  bounded by
  \begin{equation*}
    \cost \le \const MJ \left( c_X  + J (K + \abs{\mathcal{K}} +
      \abs{\mathcal{L}}) \abs{\mathcal{L}} + (K+R)^2 \abs{\mathcal{L}}\right),
  \end{equation*}
  i.e., the total cost is proportional to $J^2$ rather than $J^3$. Indeed, the
  main difference in the cost analysis as compared to the full modified
  algorithm is that~\eqref{eq:cost-5-sum} can be replaced by
  \begin{equation*}
    \cost_5 \le \sum_{j=0}^{J-1} M \left( K c_f + \cost\left(
        \mathbf{v}^{(J-j-1)}_{j+1} \right) \right).
  \end{equation*}
  Note that this essentially corresponds to the algorithm
  of~\cite{belomestny2020optimal} directly generalized to optimal control problems.
\end{remark}

\section{Convergence analysis}
\label{sec:convergence-analysis}

In this section we analyze the convergence properties of the standard and
reinforced regression algorithms introduced in the previous sections. For  related convergence analysis in the case of optimal stopping problems we refer  the interested reader to \cite{Z}, \cite{Z1}, and \cite{belomestny2020optimalmk}, see also \cite{bayer2020dynamic}.
Henceforth we assume that
\begin{equation}
\max_{j=0,\ldots,J}\sup_{y\in\mathcal{L}}\sup_{a\in\mathcal{K}}\sup_{x\in\mathcal{X}%
}\left\vert H_{j}(a,y,x)\right\vert \leq C_{H}, \label{eq:boundH}%
\end{equation}
then all the value functions
\[
v_{j}^{\ast}(y,x):=\sup_{\mathbf{A} = (A_\ell)_{\ell = j}^J \in \mathcal{A}_{j}(y,x)}\mathsf{E}\left[
\sum_{\ell=j}^{J}H_{\ell}\left(  A_{\ell},Y_{\ell}(\mathbf{A};j,y),X_{\ell}%
^{j,x}\right)  \right]
\]
are uniformly bounded by $JC_{H}.$
Fix a sequence of
spaces $\Psi_{j},$ $j=0,
\ldots,J,$ of functions defined on $\mathcal{X}.$ We stress that these spaces are not necessarily linear at this point.
Construct the corresponding  sequence of estimates $(v_{j,M}(y,x))_{j=0}^{J}$ via
\begin{align}
v_{J,M}(y,x) &  =\sup_{a\in K_{j}(y,x)}H_{J}(a,y,x)\text{ \ \ }\mathrm{and}%
\label{apdyn}\\
v_{j,M}(y,x) &  =\sup_{a\in K_{j}(y,x)}\left(  H_{j}(a,y,x)+T_W\mathcal{P}%
_{j,M}[v_{j+1,M}](\varphi_{j+1}(a,y),x)\right)  ,\text{ \ \ }j<J,\nonumber
\end{align}
where $\mathcal{P}_{j,M}[g](z,x)$ stands for the  empirical projection of the conditional expectation $\mathsf{E}[  g(z,X_{j+1}^{j,x})]$  on $\Psi_j,$
 based on a  sample
\begin{equation}
\mathcal{D}_{M,j}=\Bigl\{(X_{j}^{(m)},X_{j+1}^{(m)}),\text{ \ \ }m=1,\ldots
,M\Bigr\}\label{sh}%
\end{equation}
from the joint distribution of $(X_{j},X_{j+1}),$ that is,
\[
{\mathcal{P}}_{j,M}[g](z,\cdot)\in\arg\inf_{\psi\in\Psi_{j}%
}\sum_{m=1}^{M}\left[  \left\vert g(z,X_{j+1}^{(m)})-\psi(X_{j}^{(m)}%
)\right\vert ^{2}\right].
\]
In \eqref{apdyn} $T_{W}$ is a truncation operator at level $W=JC_{H}$ defined by
\[
T_{W}f(x)=%
\begin{cases}
f(x), & |f(x)|\leq W,\\
W\mathrm{sign}(f(x)), & \mathrm{otherwise.}%
\end{cases}
\]
Due to Theorem~11.5 in \cite{gyorfi2002distribution}, one has for all $g$ with
$\left\Vert g\right\Vert _{\infty}\leq W,$ $j=0,\ldots ,J-1,$ and all
$z\in\mathcal{L},$ that
\begin{multline}
\mathsf{E}\left[  \left\Vert T_W\mathcal{P}_{j,M}[g](z,\cdot)-\mathsf{E}\left[
g(z,X_{j+1}^{j,\cdot})\right]  \right\Vert _{L_{2}(\mu_{j})}^{2}\right] \label{gyoe} \\
\leq\varepsilon_{j,M}^{2}+2\underset{w\,\in\,\Psi_{j}}{\inf}\left\Vert
g(z,\cdot)-w\right\Vert _{L_{2}(\mu_{j})}^{2}\text{ \ \ with \ \ }%
\varepsilon_{j,M}^{2}:=cW^{4}\frac{1+\log M}{M} \mathtt{VC}(\Psi_j),
\end{multline}
where $\mathtt{VC}(\Psi_j)$ is the Vapnik-Chervonenkis dimension of $\Psi_j$ (see Definition~9.6 in  \cite{gyorfi2002distribution}$), \mu_{j}$ is the distribution of
$X_{j},$  and $c$ is an absolute constant.
In order to keep the analysis tractable, we assume that the sets
$\mathcal{D}_{M,j}$ are independent for different $j$, see Remark~\ref{indep} below.  More specifically, we
consider an algorithmic framework based on (\ref{apdyn}), where for every
exercise date the samples (\ref{sh}) are simulated independently, and consider
the information sets%
\[
\mathcal{G}_{j,M}:=\sigma\left\{  \mathbf{X}^{j;M},\ldots,\mathbf{X}%
^{J;M}\right\}  \text{ with }\mathbf{X}^{j;M}:=\bigl(  X_{j}^{(m)}%
,\, m=\ 1,\ldots,M\bigr)  .
\]
Let us define for $j<J,$ $z\in\mathcal{L},$ $x\in\mathcal{X},$
\begin{equation}
\widehat{C}_{j}(z,x):=T_W\mathcal{P}_{j,M}[v_{j+1,M}](z,x), \label{ctil0}%
\end{equation}
and for a generic (exact) dummy trajectory $\left(  X_{l}\right)
_{l=0,\ldots,J}$ independent of $\mathcal{G}_{j,M},$ let
\begin{equation}
\widetilde{C}_{j}(z,x):=\mathsf{E}_{\mathcal{G}_{j+1,M}}\left[
v_{j+1,M}(z,X^{j,x}_{j+1})\right]  . \label{ctil}%
\end{equation}
Note that $\widetilde{C}_{j}\left(  \cdot,\cdot\right)  $ is a $\mathcal{G}%
_{j+1,M}$-measurable random function while the estimate $\widehat{C}%
_{j}\left(  \cdot,\cdot\right)  $ is a $\mathcal{G}_{j}$-measurable one. We
further define
\begin{equation}
C_{j}^{\ast}(z,x)=\mathsf{E}\left[ v_{j+1}^{\ast}(z,X^{j,x}_{j+1})\right], \quad j<J.
\label{ctil1}
\end{equation}
The following lemma holds.
\begin{lemma}
\label{lemTV} We have that,%
\begin{equation}
\mathsf{E}\left[\Bigl\Vert \sup_{z\in\mathcal{L}}\Bigl\vert \widetilde{C}_{j}(z,\cdot
)-C_{j}^{\ast}(z,\cdot)\Bigr\vert \Bigr\Vert _{L_{2}(\mu_{j})}^2\right]\leq\mathsf{E}\left[\Bigl\Vert \sup_{z\in\mathcal{L}}\Bigl\vert \widehat
{C}_{j+1}(z,\cdot)-C_{j+1}^{\ast}(z,\cdot)\Bigr\vert \Bigr\Vert _{L_{2}%
(\mu_{j+1})}^2\right].\label{boundTV}%
\end{equation}
\end{lemma}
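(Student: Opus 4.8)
The plan is to remove one step of the dynamic programming recursion, thereby reducing the one-step error at level $j$ to the continuation error at level $j+1$, and then to transport the $L_2$-norm from $\mu_j$ to $\mu_{j+1}$ via the Markov property. First I would exploit that $v_{j+1,M}$ is $\mathcal{G}_{j+1,M}$-measurable while the dummy trajectory is independent of $\mathcal{G}_{j+1,M}$, so that by \eqref{ctil} and \eqref{ctil1} the two quantities on the left are conditional expectations of the same transition applied to the fixed functions $v_{j+1,M}$ and $v_{j+1}^{\ast}$:
\[
\widetilde{C}_j(z,x)-C_j^{\ast}(z,x)=\mathsf{E}_{\mathcal{G}_{j+1,M}}\!\left[\big(v_{j+1,M}-v_{j+1}^{\ast}\big)(z,X_{j+1}^{j,x})\right].
\]
Bounding the integrand by its supremum over $z\in\mathcal{L}$ and noting that the bound is then independent of $z$, I obtain
\[
\sup_{z\in\mathcal{L}}\abs{\widetilde{C}_j(z,x)-C_j^{\ast}(z,x)}\le\mathsf{E}_{\mathcal{G}_{j+1,M}}\!\left[\sup_{z\in\mathcal{L}}\abs{v_{j+1,M}(z,X_{j+1}^{j,x})-v_{j+1}^{\ast}(z,X_{j+1}^{j,x})}\right].
\]

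The key step is to dominate the value-function difference by the continuation-function difference $\Delta_{j+1}(x):=\sup_{z\in\mathcal{L}}\abs{\widehat{C}_{j+1}(z,x)-C_{j+1}^{\ast}(z,x)}$, which is exactly the integrand in the right-hand side of \eqref{boundTV}. Here I would invoke the dynamic programming representations at level $j+1$: by \eqref{apdyn} and \eqref{ctil0} one has $v_{j+1,M}(z,x)=\sup_{a\in K_{j+1}(z,x)}\big(H_{j+1}(a,z,x)+\widehat{C}_{j+1}(\varphi_{j+2}(a,z),x)\big)$, while by \eqref{eq:8} one has $v_{j+1}^{\ast}(z,x)=\sup_{a\in K_{j+1}(z,x)}\big(H_{j+1}(a,z,x)+C_{j+1}^{\ast}(\varphi_{j+2}(a,z),x)\big)$. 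As both suprema run over the same admissible set $K_{j+1}(z,x)$ and carry the same payoff term, the elementary inequality $\abs{\sup_a f(a)-\sup_a g(a)}\le\sup_a\abs{f(a)-g(a)}$ combined with $\varphi_{j+2}(a,z)\in\mathcal{L}$ yields the pointwise and $z$-uniform bound $\sup_{z\in\mathcal{L}}\abs{v_{j+1,M}(z,x)-v_{j+1}^{\ast}(z,x)}\le\Delta_{j+1}(x)$. (In the boundary case $j+1=J$ both value functions reduce to $\sup_a H_J(a,z,x)$, so the difference vanishes and the estimate is trivial.)

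Combining the two observations gives $\sup_{z\in\mathcal{L}}\abs{\widetilde{C}_j(z,x)-C_j^{\ast}(z,x)}\le\mathsf{E}_{\mathcal{G}_{j+1,M}}[\Delta_{j+1}(X_{j+1}^{j,x})]$. I would then square both sides, apply conditional Jensen to push the square inside the conditional expectation, integrate against $\mu_j(dx)$, and take the outer expectation $\mathsf{E}$. Since $\Delta_{j+1}$ is $\mathcal{G}_{j+1,M}$-measurable and the one-step transition out of $x$ is independent of $\mathcal{G}_{j+1,M}$, the Markov marginal identity $\int\mu_j(dx)\,\mathrm{Law}(X_{j+1}^{j,x})=\mu_{j+1}$ converts $\int\mu_j(dx)\,\mathsf{E}_{\mathcal{G}_{j+1,M}}[\Delta_{j+1}(X_{j+1}^{j,x})^2]$ into $\norm{\Delta_{j+1}}_{L_2(\mu_{j+1})}^2$; taking expectations then produces exactly the right-hand side of \eqref{boundTV}.

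I expect the main obstacle to be organizational rather than analytic: the argument hinges on careful bookkeeping of the conditioning — tracking which objects ($v_{j+1,M}$, $\widehat{C}_{j+1}$, $\Delta_{j+1}$) are $\mathcal{G}_{j+1,M}$-measurable versus independent of it — and on the fact that the value-function bound in the key step is \emph{uniform} in $z$. It is precisely this uniformity that allows the $\sup_{z}$ to pass through the conditional expectation and lets the $L_2(\mu_j)$ norm collapse onto the single $\sup_{z}$ continuation error at level $j+1$, rather than leaving an uncontrolled, $z$-indexed family of error terms.
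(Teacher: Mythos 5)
Your proposal is correct and follows essentially the same route as the paper's proof: the conditional-expectation representation of $\widetilde{C}_j - C_j^\ast$, the domination of the value-function difference at level $j+1$ by the continuation-function difference via the sup-over-actions inequality (the paper's display \eqref{eq:vC}), and the concluding Jensen/tower-property step transporting the norm from $\mu_j$ to $\mu_{j+1}$. No substantive differences to report.
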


\begin{proof}
Let $X$ be a generic (exact) dummy trajectory independent of $\mathcal{G}%
_{j+1,M}.$ Then from (\ref{ctil}), and (\ref{ctil1}) we see that for $j<J,$%
\begin{equation}
\left\vert \widetilde{C}_{j}(z,X_{j})-C_{j}^{\ast}(z,X_{j})\right\vert
\leq\mathsf{E}_{\mathcal{G}_{j+1,M}}\left[  \left.  \left\vert v_{j+1,M}(z,X_{j+1})-v_{j+1}^{\ast
}(z,X_{j+1})\right\vert \right\vert X_{j}\right]  \label{zw}%
\end{equation}
Next, by (\ref{eq:8}), (\ref{apdyn}), (\ref{ctil0}), and (\ref{ctil1}) we have
that%
\begin{eqnarray}
\nonumber
\left\vert v_{j+1,M}(z,x)-v_{j+1}^{\ast}(z,x)\right\vert &\leq & \sup_{a\in K_{j+1}(z,x)}\left\vert \widehat{C}_{j+1}(\varphi
_{j+2}(a,z),x)-C_{j+1}^{\ast}(\varphi_{j+2}(a,z),x)\right\vert \\
\label{eq:vC}
&  \leq &\sup_{z^{\prime}\in\mathcal{L}}\left\vert \widehat{C}_{j+1}(z^{\prime
},x)-C_{j+1}^{\ast}(z^{\prime},x)\right\vert.
\end{eqnarray}
Hence, by (\ref{zw}) one has that%
\[
\sup_{z\in\mathcal{L}}\left\vert \widetilde{C}_{j}(z,X_{j})-C_{j}^{\ast
}(z,X_{j})\right\vert \leq\mathsf{E}_{\mathcal{G}_{j+1,M}}\left[  \left.  \sup_{z\in\mathcal{L}%
}\left\vert \widehat{C}_{j+1}(z,X_{j+1})-C_{j+1}^{\ast}(z,X_{j+1})\right\vert
\right\vert X_{j}\right] .
\]
Finally, by taking the \textquotedblleft all-in expectation\textquotedblright%
\ w.r.t. the law $\mathcal{\mu}_{j}\otimes\mathbb{P}_{M}$, we observe that%
\begin{eqnarray*}
\mathsf{E}\left[  \sup_{z\in\mathcal{L}}\left\vert \widetilde{C}_{j}%
(z,X_{j})-C_{j}^{\ast}(z,X_{j})\right\vert ^{2}\right] 
&\leq&\mathsf{E}\left\{\mathsf{E}_{\mathcal{G}_{j+1,M}}\left[  \left.  \sup_{z\in\mathcal{L}}\left\vert \widehat
{C}_{j+1}(z,X_{j+1})-C_{j+1}^{\ast}(z,X_{j+1})\right\vert \right\vert
X_{j}\right]\right\}  ^{2}\\
&\leq&\mathsf{E}\left[  \sup_{z\in\mathcal{L}}\left\vert \widehat{C}%
_{j+1}(z,X_{j+1})-C_{j+1}^{\ast}(z,X_{j+1})\right\vert ^{2}\right]
\end{eqnarray*}
by Jensen's inequality and the tower property.
\end{proof}

In fact, Lemma~\ref{lemTV} is the key to the next proposition.

\begin{proposition}
\label{prop_main1}
Set
\[
\mathcal{E}_{j}:=\Bigl\Vert \sup_{z\in\mathcal{L}}\left\vert \widehat{C}%
_{j}(z,\cdot)-C_{j}^{\ast}(z,\cdot)\right\vert \Bigr\Vert _{L_{2}(\mu
_{j}\otimes\mathbb{P}_{M})}, \quad j=0,\ldots,J-1,
\]
with $\mathbb{P}_{M}$ being the law of the sample $X_{j}^{(m)},\, m=\ 1,\ldots,M,\, j=1,\ldots,J.$
Then it holds
\begin{equation}
\label{rec}
\mathcal{E}_{j}
\leq\left\vert \mathcal{L}\right\vert \Bigl(  \varepsilon_{j,M}+\sqrt{2}%
\sup_{z\in\mathcal{L}}\underset{w\,\in\,\,\Psi_{j}}{\inf}\bigl\Vert
\widetilde{C}_{j}(z,\cdot)-w\bigr\Vert _{L_{2}(\mu_{j}\otimes \mathbb{P}_M)}\Bigr)  +\left\vert
\mathcal{L}\right\vert \mathcal{E}_{j+1}
\end{equation}
for all $j=0,\ldots,J-1,$ with $\mathcal{E}_{J}=0$ by definition.
\end{proposition}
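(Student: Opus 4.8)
The plan is to prove the recursion by the usual estimation/propagation (bias–variance) decomposition, treating the regression error newly incurred at level $j$ with the Gyárfi-type bound \eqref{gyoe} and the error inherited from later times with Lemma~\ref{lemTV}. Concretely, I would insert the intermediate quantity $\widetilde{C}_j$ and use the pointwise inequality $\sup_{z}\abs{\widehat{C}_j(z,\cdot)-C_j^\ast(z,\cdot)}\le \sup_z\abs{\widehat{C}_j(z,\cdot)-\widetilde{C}_j(z,\cdot)}+\sup_z\abs{\widetilde{C}_j(z,\cdot)-C_j^\ast(z,\cdot)}$, followed by the triangle inequality in $L_2(\mu_j\otimes\mathbb{P}_M)$, so that $\mathcal{E}_j$ is bounded by an estimation term $\norm{\sup_z\abs{\widehat{C}_j(z,\cdot)-\widetilde{C}_j(z,\cdot)}}_{L_2(\mu_j\otimes\mathbb{P}_M)}$ plus a propagation term $\norm{\sup_z\abs{\widetilde{C}_j(z,\cdot)-C_j^\ast(z,\cdot)}}_{L_2(\mu_j\otimes\mathbb{P}_M)}$.

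For the propagation term, Lemma~\ref{lemTV} applies verbatim: its right-hand side is precisely $\mathcal{E}_{j+1}^2$, so this term is at most $\mathcal{E}_{j+1}\le\abs{\mathcal{L}}\mathcal{E}_{j+1}$, which yields the last summand of \eqref{rec}. (In fact the bound $\mathcal{E}_{j+1}$ without the factor $\abs{\mathcal{L}}$ holds here, so the stated recursion is a harmless weakening.)

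The estimation term is where the real work lies and where the factor $\abs{\mathcal{L}}$ enters. I would pass from the supremum to a sum, $\sup_z\abs{\widehat{C}_j(z,\cdot)-\widetilde{C}_j(z,\cdot)}\le\sum_{z\in\mathcal{L}}\abs{\widehat{C}_j(z,\cdot)-\widetilde{C}_j(z,\cdot)}$, and bound the norm of the sum by the sum of the $\abs{\mathcal{L}}$ norms. For each fixed $z$ the key is to recognise that, conditionally on $\mathcal{G}_{j+1,M}$, the function $g\coloneqq v_{j+1,M}$ is a fixed, $\mathcal{G}_{j+1,M}$-measurable function bounded by $W$, while the sample $\mathcal{D}_{M,j}$ is independent of $\mathcal{G}_{j+1,M}$; the associated regression function $x\mapsto\mathsf{E}[g(z,X_{j+1}^{j,x})]$ is exactly $\widetilde{C}_j(z,\cdot)$, and its truncated empirical projection is $\widehat{C}_j(z,\cdot)$. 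Hence \eqref{gyoe}, applied conditionally, gives $\mathsf{E}[\norm{\widehat{C}_j(z,\cdot)-\widetilde{C}_j(z,\cdot)}_{L_2(\mu_j)}^2\mid\mathcal{G}_{j+1,M}]\le\varepsilon_{j,M}^2+2\inf_{w\in\Psi_j}\norm{\widetilde{C}_j(z,\cdot)-w}_{L_2(\mu_j)}^2$. Taking the outer expectation, using $\mathsf{E}[\inf_w\cdots]\le\inf_w\mathsf{E}[\cdots]$ (so that the infimum may be taken over deterministic $w$ against the full $L_2(\mu_j\otimes\mathbb{P}_M)$ norm), and then $\sqrt{a^2+2b^2}\le a+\sqrt{2}\,b$, yields $\norm{\widehat{C}_j(z,\cdot)-\widetilde{C}_j(z,\cdot)}_{L_2(\mu_j\otimes\mathbb{P}_M)}\le\varepsilon_{j,M}+\sqrt{2}\inf_{w\in\Psi_j}\norm{\widetilde{C}_j(z,\cdot)-w}_{L_2(\mu_j\otimes\mathbb{P}_M)}$. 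Bounding the per-$z$ infimum by $\sup_z$ and summing the $\abs{\mathcal{L}}$ terms reproduces the first summand of \eqref{rec}.

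The main obstacle is the conditioning step: \eqref{gyoe} is a statement about regressing a \emph{fixed} bounded function, whereas here the regressed function $v_{j+1,M}$ is itself random, having been estimated from later data. Making the argument rigorous rests on the standing assumption that the per-step samples $\mathcal{D}_{M,j}$ are drawn independently across $j$, so that conditioning on $\mathcal{G}_{j+1,M}$ freezes $v_{j+1,M}$ while preserving both the independence and the i.i.d.\ structure of $\mathcal{D}_{M,j}$ that \eqref{gyoe} requires. A secondary point to handle carefully is the interchange of expectation and infimum in the approximation-error term, together with the identification of the true regression function with $\widetilde{C}_j(z,\cdot)$ rather than with $v_{j+1,M}(z,\cdot)$ itself.
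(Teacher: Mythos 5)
Your proposal is correct and follows essentially the same route as the paper's proof: decompose via the intermediate quantity $\widetilde{C}_j$, apply the bound \eqref{gyoe} conditionally on $\mathcal{G}_{j+1,M}$ (justified by the independence of the samples $\mathcal{D}_{M,j}$ across $j$) to control the newly incurred regression error, invoke Lemma~\ref{lemTV} for the propagated error, and pick up the factor $\abs{\mathcal{L}}$ by passing from the supremum over $z$ to a sum. Your bookkeeping even yields the marginally sharper bound with $\mathcal{E}_{j+1}$ in place of $\abs{\mathcal{L}}\mathcal{E}_{j+1}$, and your explicit treatment of the interchange $\mathsf{E}[\inf_w\cdots]\le\inf_w\mathsf{E}[\cdots]$ makes precise a step the paper leaves implicit.
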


\begin{proof}
The case $j=J-1$ follows from \eqref{gyoe} and the fact that $\widetilde C_{J-1}=C^{\ast}_{J-1}.$
Set $r_{j,M}(z)=\underset{w\,\in\,\Psi_{j}}{\inf}\Vert
\widetilde{C}_{j}(z,\cdot)-w\Vert _{L_{2}(\mu_{j}\otimes\mathbb{P}_{M}%
)}.$ Due to (\ref{gyoe}) we have with probability $1,$
\begin{equation}
\mathsf{E}_{\mathcal{G}_{j+1,M}}\left[  \left\Vert \widehat{C}_{j}%
(z,\cdot)-\widetilde{C}_{j}(z,\cdot)\right\Vert _{L_{2}(\mu_{j})}^{2}\right]
\leq\varepsilon_{j,M}^{2}+2r^2_{j,M}(z).
\end{equation}
Hence
\begin{equation}
\Bigl\Vert \widehat{C}_{j}(z,\cdot)-\widetilde{C}%
_{j}(z,\cdot)\Bigr\Vert _{L_{2}(\mu_{j}\otimes\mathbb{P}_{M})}\leq
\varepsilon_{j,M}+\sqrt{2}r_{j,M}(z).\label{tri}%
\end{equation}
 By applying (\ref{tri}) it follows that%
\begin{eqnarray*}
\Bigl\Vert \widehat{C}_{j}(z,\cdot)-C_{j}^{\ast}(z,\cdot)\Bigr\Vert
_{L_{2}(\mu_{j}\otimes\mathbb{P}_{M})}
&\leq &\varepsilon_{j,M}+\sqrt{2}r_{j,M}(z)+\left\Vert \widetilde{C}_{j}(z,\cdot)-C_{j}^{\ast}(z,\cdot)\right\Vert
_{L_{2}(\mu_{j}\otimes\mathbb{P}_{M})}.
\end{eqnarray*}
From this and Lemma~\ref{lemTV} we imply%
\begin{eqnarray*}
\sup_{z\in\mathcal{L}}\Bigl\Vert \widehat{C}_{j}(z,\cdot)-C_{j}^{\ast
}(z,\cdot)\Bigr\Vert _{L_{2}(\mu_{j}\otimes\mathbb{P}_{M})}
&\leq &\varepsilon_{j,M}+\sqrt{2}\sup_{z\in\mathcal{L}}r_{j,M}(z)%
\\
&& +\Bigl\Vert \sup_{z\in\mathcal{L}}\left\vert \widehat{C}%
_{j+1}(z,\cdot)-C_{j+1}^{\ast}(z,\cdot)\right\vert \Bigr\Vert _{L_{2}%
(\mu_{j+1}\otimes\mathbb{P}_{M})}
\end{eqnarray*}
and then (\ref{rec}) follows.
\end{proof}

\begin{corollary}\label{cor:convergence}
Suppose that
\[
\sup_{z\in\mathcal{L}}\underset{w\,\in\,\,\Psi_{j}}{\inf}\bigl\Vert
\widetilde{C}_{j}(z,\cdot)-w\bigr\Vert _{L_{2}(\mu_{j}\otimes \mathbb{P}_M)}\leq\delta, \quad \mathtt{VC}(\Psi_j)\leq D,\quad  0\leq j\leq J-1,
\]
for some $\delta>0$ and $D>0.$
Proposition \ref{prop_main1} then yields for
$j=0,\ldots,J-1,$ by using \eqref{eq:vC}, 
\begin{multline}
\label{eq:main-bound}
\Bigl\Vert \sup_{z\in\mathcal{L}}\left\vert  v_{j,M}(z,\cdot)-v_{j}^{\ast}(z,\cdot)\right\vert \Bigr\Vert _{L_{2}(\mu
_{j}\otimes\mathbb{P}_{M})}\leq
\left( cW^{4}\frac{1+\log M}{M}D+\sqrt{2}\delta\right) \frac{ \left\vert \mathcal{L}%
\right\vert^{J-j+1}-\left\vert \mathcal{L}\right\vert}{\left\vert \mathcal{L}\right\vert
-1}.
\end{multline}
\end{corollary}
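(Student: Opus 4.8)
The plan is to unroll the recursion from Proposition~\ref{prop_main1} and then convert the bound on the continuation-function errors $\mathcal{E}_j$ into the stated bound on the value-function errors via the Lipschitz-type estimate~\eqref{eq:vC}. First I would feed the two hypotheses into the recursion~\eqref{rec}. Under $\sup_{z}\inf_{w}\norm{\widetilde{C}_j(z,\cdot)-w}_{L_2(\mu_j\otimes\mathbb{P}_M)}\le\delta$ and $\mathtt{VC}(\Psi_j)\le D$, the per-step term in~\eqref{rec} is bounded uniformly in $j$: recalling the definition $\varepsilon_{j,M}^2 = cW^4\frac{1+\log M}{M}\mathtt{VC}(\Psi_j)$, one has $\varepsilon_{j,M}^2 \le cW^4\frac{1+\log M}{M}D$, and the infimum term is at most $\sqrt{2}\,\delta$. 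Hence, writing $\beta \coloneqq cW^4\frac{1+\log M}{M}D + \sqrt{2}\delta$ for the common per-step bound (absorbing $\varepsilon_{j,M}$ into the same square-rooted quantity, which is the form the final display uses), the recursion reads $\mathcal{E}_j \le \abs{\mathcal{L}}\beta + \abs{\mathcal{L}}\mathcal{E}_{j+1}$, with terminal condition $\mathcal{E}_J = 0$.

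Next I would solve this linear recursion explicitly. Iterating $\mathcal{E}_j \le \abs{\mathcal{L}}\beta + \abs{\mathcal{L}}\mathcal{E}_{j+1}$ downward from $\mathcal{E}_J=0$ gives a geometric sum:
\begin{equation*}
  \mathcal{E}_j \le \beta\sum_{\ell=1}^{J-j}\abs{\mathcal{L}}^{\ell}
  = \beta\,\frac{\abs{\mathcal{L}}^{J-j+1}-\abs{\mathcal{L}}}{\abs{\mathcal{L}}-1}.
\end{equation*}
This is already the geometric factor appearing on the right-hand side of~\eqref{eq:main-bound}, so the remaining task is purely to pass from $\mathcal{E}_j$ to the value-function error.

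For that final transfer I would invoke~\eqref{eq:vC}, which was established in the proof of Lemma~\ref{lemTV} and states pointwise that $\abs{v_{j,M}(z,x)-v^{\ast}_j(z,x)} \le \sup_{z'\in\mathcal{L}}\abs{\widehat{C}_{j}(z',x)-C^{\ast}_{j}(z',x)}$ (shifting the index appropriately, the value error at time $j$ is controlled by the continuation error at time $j$). Taking the supremum over $z\in\mathcal{L}$ on the left and then the $L_2(\mu_j\otimes\mathbb{P}_M)$ norm, the right-hand side becomes exactly $\mathcal{E}_j$, and substituting the geometric bound just derived yields~\eqref{eq:main-bound}.

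The only real subtlety — and the step I would watch most carefully — is the bookkeeping of the $\abs{\mathcal{L}}$ prefactors and the index shift between the continuation error at one time and the value error at the same time: one must check that applying the supremum-over-$z$ and the norm to~\eqref{eq:vC} does not introduce an extra factor of $\abs{\mathcal{L}}$ beyond those already accounted for in the recursion, and that the terminal normalization $\mathcal{E}_J=0$ lines up with $v_{J,M}=v_J^\ast$ on the finite action set. Everything else is the routine evaluation of a geometric series, so I expect no genuine obstacle beyond this constant-tracking.
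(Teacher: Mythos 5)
Your proposal is correct and follows exactly the route the paper intends (the paper gives no separate proof, stating only that the corollary follows from Proposition~\ref{prop_main1} together with~\eqref{eq:vC}): specialize the recursion~\eqref{rec} under the two hypotheses, unroll it from $\mathcal{E}_J=0$ into the geometric sum $\beta\sum_{\ell=1}^{J-j}\abs{\mathcal{L}}^{\ell}$, and transfer to the value-function error via~\eqref{eq:vC}, which costs no extra factor of $\abs{\mathcal{L}}$. The only blemish — writing $cW^{4}\frac{1+\log M}{M}D$ where the recursion actually carries $\varepsilon_{j,M}\le\bigl(cW^{4}\frac{1+\log M}{M}D\bigr)^{1/2}$ — is inherited verbatim from the paper's own display~\eqref{eq:main-bound}, so your bookkeeping matches the statement as given.
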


\begin{corollary}
By inserting the estimate%
\begin{equation*}
\underset{w\,\in\,\,\Psi_{j}}{\inf}\left\Vert \widetilde{C}_{j}%
(z,\cdot)-w\right\Vert _{L_{2}(\mu_{j}\otimes\mathbb{P}_{M})}
\leq\left\Vert \widetilde{C}_{j}(z,\cdot)-C_{j}^{\ast}(z,\cdot)\right\Vert
_{L_{2}(\mu_{j}\otimes\mathbb{P}_{M})}\!\!\!+\underset{w\,\in\,\,\Psi_{j}%
}{\inf}\left\Vert C_{j}^{\ast}(z,\cdot)-w\right\Vert _{L_{2}(\mu_{j})}%
\end{equation*}
in Proposition \ref{prop_main1}, we get the alternative recursion%
\[
\mathcal{E}_{j}\leq\left\vert \mathcal{L}\right\vert \left(  \varepsilon
_{K,M}+\sqrt{2}\sup_{z\in\mathcal{L}}\underset{w\,\in\,\,\Psi_{j}}{\inf
}\left\Vert C_{j}^{\ast}(z,\cdot)-w\right\Vert _{L_{2}(\mu_{j})}\right)
+\left\vert \mathcal{L}\right\vert (1+\sqrt{2})\mathcal{E}_{j+1},
\]
and under the alternative assumption%
\[
\sup_{z\in\mathcal{L}}\underset{w\,\in\,\,\Psi_{j}}{\inf}\bigl\Vert
{C}^{\ast}_{j}(z,\cdot)-w\bigr\Vert _{L_{2}(\mu_{j})}\leq\delta, \quad \mathtt{VC}(\Psi_j)\leq D,\quad  0\leq j\leq J-1,
\]
for some $\delta>0$ and $D>0,$
we obtain for $j=0,\ldots,J,$ the bounds
\[
\Bigl\Vert\sup_{z\in\mathcal{L}}\left\vert v_{j,M}(z,\cdot)-v_{j}^{\ast
}(z,\cdot)\right\vert \Bigr\Vert_{L_{2}(\mu_{j}\otimes\mathbb{P}_{M})}%
\leq\left(  cW^{4}\frac{1+\log M}{M}D+\sqrt{2}\delta\right)  \left\vert
\mathcal{L}\right\vert \frac{\left(  (1+\sqrt{2})\left\vert \mathcal{L}%
\right\vert \right)  ^{J-j}-1}{(1+\sqrt{2})\left\vert \mathcal{L}\right\vert
-1}.
\]
\end{corollary}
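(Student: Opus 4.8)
The plan is to convert the recursion~\eqref{rec} of Proposition~\ref{prop_main1}, which still involves the random function $\widetilde{C}_j$, into a self-referential linear recursion in the errors $\mathcal{E}_j$ alone, and then to solve that recursion explicitly by iterating the terminal condition $\mathcal{E}_J=0$. First I would insert the stated triangle inequality into~\eqref{rec}. Since $C_j^\ast(z,\cdot)$ is deterministic, its best approximation error may be measured in $L_2(\mu_j)$, and taking the supremum over $z\in\mathcal{L}$ gives
\[
\sup_{z\in\mathcal{L}}\inf_{w\in\Psi_j}\bigl\|\widetilde{C}_j(z,\cdot)-w\bigr\|_{L_2(\mu_j\otimes\mathbb{P}_M)} \leq \sup_{z\in\mathcal{L}}\bigl\|\widetilde{C}_j(z,\cdot)-C_j^\ast(z,\cdot)\bigr\|_{L_2(\mu_j\otimes\mathbb{P}_M)} + \sup_{z\in\mathcal{L}}\inf_{w\in\Psi_j}\bigl\|C_j^\ast(z,\cdot)-w\bigr\|_{L_2(\mu_j)}.
\]

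The crux is to control the first term on the right by $\mathcal{E}_{j+1}$. Here I would exploit that pulling the supremum out of the norm only decreases it, namely
\[
\sup_{z\in\mathcal{L}}\bigl\|\widetilde{C}_j(z,\cdot)-C_j^\ast(z,\cdot)\bigr\|_{L_2(\mu_j\otimes\mathbb{P}_M)} \leq \Bigl\|\sup_{z\in\mathcal{L}}\bigl|\widetilde{C}_j(z,\cdot)-C_j^\ast(z,\cdot)\bigr|\Bigr\|_{L_2(\mu_j\otimes\mathbb{P}_M)},
\]
and then invoke Lemma~\ref{lemTV}, whose right-hand side is exactly $\mathcal{E}_{j+1}^2$; taking square roots yields the bound by $\mathcal{E}_{j+1}$. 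Substituting both estimates back into~\eqref{rec} and merging the two $\mathcal{E}_{j+1}$ contributions — the one already present and the newly produced $\sqrt{2}\,|\mathcal{L}|\,\mathcal{E}_{j+1}$ — gives the alternative recursion $\mathcal{E}_j \leq |\mathcal{L}|\bigl(\varepsilon_{j,M}+\sqrt{2}\,\delta_j\bigr) + (1+\sqrt{2})|\mathcal{L}|\,\mathcal{E}_{j+1}$, where $\delta_j\coloneqq\sup_{z}\inf_{w}\|C_j^\ast(z,\cdot)-w\|_{L_2(\mu_j)}$. The base case $j=J-1$ is noted separately since $\widetilde{C}_{J-1}=C_{J-1}^\ast$, as in Proposition~\ref{prop_main1}.

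Under the uniform assumptions $\delta_j\leq\delta$ and $\mathtt{VC}(\Psi_j)\leq D$ — so that $\varepsilon_{j,M}$ is bounded via~\eqref{gyoe} exactly as in Corollary~\ref{cor:convergence} — the recursion becomes $\mathcal{E}_j \leq A + B\,\mathcal{E}_{j+1}$ with constants $A = |\mathcal{L}|\bigl(cW^4\tfrac{1+\log M}{M}D + \sqrt{2}\delta\bigr)$ and $B = (1+\sqrt{2})|\mathcal{L}|$, and terminal value $\mathcal{E}_J=0$. Iterating downward turns this into the geometric sum $\mathcal{E}_j \leq A\sum_{k=0}^{J-j-1}B^k = A\,(B^{J-j}-1)/(B-1)$, which is precisely the asserted bound once $A$ and $B$ are inserted. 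Finally I would pass from $\mathcal{E}_j$ to the value-function error by shifting the index in~\eqref{eq:vC}, which gives the pointwise estimate $\sup_{z}|v_{j,M}(z,x)-v_j^\ast(z,x)| \leq \sup_{z'}|\widehat{C}_j(z',x)-C_j^\ast(z',x)|$, so that taking the $L_2(\mu_j\otimes\mathbb{P}_M)$ norm bounds the left-hand side of the corollary by $\mathcal{E}_j$; the case $j=J$ is immediate as $v_{J,M}=v_J^\ast$ and $\mathcal{E}_J=0$. The only genuinely delicate point is the commutation of the supremum over $z$ with the norm in the second display: the recursion~\eqref{rec} carries the supremum outside the norm while Lemma~\ref{lemTV} is phrased with it inside, and the elementary inequality above is exactly what bridges the two; the remainder is bookkeeping of constants and the standard geometric summation.
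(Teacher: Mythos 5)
Your proposal is correct and follows exactly the route the paper intends: insert the triangle inequality into the recursion of Proposition~\ref{prop_main1}, control $\sup_{z}\bigl\Vert \widetilde{C}_{j}(z,\cdot)-C_{j}^{\ast}(z,\cdot)\bigr\Vert$ by $\mathcal{E}_{j+1}$ via Lemma~\ref{lemTV} (using that the supremum commutes with the norm in the favourable direction), solve the resulting linear recursion $\mathcal{E}_{j}\le A+B\,\mathcal{E}_{j+1}$ with $\mathcal{E}_{J}=0$ by geometric summation, and pass to the value functions through~\eqref{eq:vC}. The paper states the corollary without a written proof, and your argument supplies precisely the omitted details (including the harmless observation that $\varepsilon_{K,M}$ should read $\varepsilon_{j,M}$), so there is nothing further to add.
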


\begin{remark}\label{indep}
In \cite{Z}  the convergence of an  \textquotedblleft
independent sample version\textquotedblright\ of the Longstaff-Schwartz
algorithm was studied based on an assumption similar to the  independence $\mathcal{D}_{M,j}$ for different $j$ here.
However,  in a later
paper \cite{Z1} it was shown (by more involved analysis) that the
convergence rates based on one and the same sample are basically the same as
in \cite{Z} up to certain constants. One therefore may naturally expect that
similar conclusions apply in our context.  Therefore the  numerical examples in Section~\ref{sec:numerical-examples} are based 
on a single sample of $M$ trajectories.  
\end{remark}

The proposed reinforced regression algorithm with $I=J$ uses linear approximation spaces of the form
\begin{eqnarray}
\label{eq:basis-rf}
&\Psi_{j}=\mathrm{span}\{\psi_1(x),\ldots,\psi_K(x), {v}_{j+1,M}(y_{1},x),\ldots,{v}_{j+1,M}(y_{R},x)\},\quad j=0,\ldots,J-1,
\end{eqnarray}
for $\mathcal{L}=\{y_1,\ldots,y_R\},$ where $\psi_1(x),\ldots,\psi_K(x)$ are some fixed basis functions (e.g. polynomials) on $\mathcal{X}.$ In this case $\mathtt{VC}(\Psi_j)\leq K+R,\quad  0\leq j\leq J-1.$ In order to see the advantage of adding additional basis functions more clearly, we prove the following proposition.
\begin{proposition}
\label{prop:cor-main}
Assume additionally that
\begin{equation}
\max_{j=1,\ldots,J}\sup_{y\in\mathcal{L}}\sup_{a\in\mathcal{K}}\sup_{x\in\mathcal{X}}\left|H_{j}(a,y,x_1)-H_{j}(a,y,x_2)\right|\leq L_{H}|x_1-x_2|, \label{eq:lip-boundH}
\end{equation}
for all $x_1,x_2\in \mathcal{X}$ and
\begin{eqnarray}
\label{eq:lip-X}
\max_{j=1,\ldots,J}\max_{\ell=j,\ldots,J}\mathsf{E}[|X_{\ell}^{j,x_1}-X_{\ell}^{j,x_2}|]\leq L_X|x_1-x_2|,\quad \forall x_1,x_2\in \mathcal{X}
\end{eqnarray}
for some constants $L_H>0,$  $L_X>0.$ Then it holds for  reinforced spaces $(\Psi_j)$ from \eqref{eq:basis-rf}
\begin{eqnarray*}
\sup_{z\in\mathcal{L}}\underset{w\,\in\,\,\Psi_{j}}{\inf}\left\Vert
\widetilde{C}_{j}(z,\cdot)-w\right\Vert _{L_{2}(\mu_{j}\otimes \mathbb{P}_M)}\leq JL_XL_H\left[\mathsf{E}\int|X^{j,x}_{j+1}-x|^2\mu_j(dx)\right]^{1/2}.
\end{eqnarray*}
\end{proposition}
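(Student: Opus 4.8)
The plan is to exploit the defining feature of the reinforced spaces~\eqref{eq:basis-rf}: for every $z\in\mathcal{L}=\{y_1,\ldots,y_R\}$ the function ${v}_{j+1,M}(z,\cdot)$ is itself one of the spanning elements of $\Psi_j$. Hence I would simply take $w={v}_{j+1,M}(z,\cdot)$ in the infimum, so that it suffices to prove
\[
  \norm{\widetilde{C}_j(z,\cdot)-{v}_{j+1,M}(z,\cdot)}_{L_2(\mu_j\otimes\mathbb{P}_M)}\le JL_XL_H\left[\mathsf{E}\int\abs{X^{j,x}_{j+1}-x}^2\mu_j(dx)\right]^{1/2}
\]
uniformly in $z$. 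Conceptually this is exactly the heuristic motivating reinforcement: by~\eqref{ctil}, $\widetilde{C}_j(z,x)=\mathsf{E}_{\mathcal{G}_{j+1,M}}[{v}_{j+1,M}(z,X^{j,x}_{j+1})]$ is a one-step smoothing of the reinforcing basis function ${v}_{j+1,M}(z,\cdot)$, and therefore stays close to it whenever the one-step increment $X^{j,x}_{j+1}-x$ is small.

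Next I would rewrite the difference as the expectation of a one-step increment. For fixed $x$ the quantity ${v}_{j+1,M}(z,x)$ is $\mathcal{G}_{j+1,M}$-measurable, while the dummy trajectory $X^{j,x}$ is independent of $\mathcal{G}_{j+1,M}$; hence~\eqref{ctil} gives $\widetilde{C}_j(z,x)-{v}_{j+1,M}(z,x)=\mathsf{E}_{\mathcal{G}_{j+1,M}}[{v}_{j+1,M}(z,X^{j,x}_{j+1})-{v}_{j+1,M}(z,x)]$. Conditional Jensen bounds its absolute value by $\mathsf{E}_{\mathcal{G}_{j+1,M}}[\abs{{v}_{j+1,M}(z,X^{j,x}_{j+1})-{v}_{j+1,M}(z,x)}]$, and if ${v}_{j+1,M}(z,\cdot)$ is Lipschitz with a (deterministic) constant $L$, this is at most $L\,\mathsf{E}[\abs{X^{j,x}_{j+1}-x}]$, the conditioning disappearing by independence. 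Squaring, integrating against $\mu_j$, taking $\mathsf{E}$ over $\mathbb{P}_M$ of the now deterministic integrand, and using $(\mathsf{E}\abs{Y})^2\le\mathsf{E}\abs{Y}^2$, I obtain the bound with $L^2\,\mathsf{E}\int\abs{X^{j,x}_{j+1}-x}^2\mu_j(dx)$ on the right; taking square roots and noting that neither $L$ nor the increment depends on $z$ settles the routine part.

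The real content — and the step I expect to be the main obstacle — is to show that ${v}_{j+1,M}(z,\cdot)$ is Lipschitz with constant $L\le JL_XL_H$ (in fact $(J-j)L_XL_H$ would suffice). For the exact value function this is a direct backward induction from the representation~\eqref{eq:7}: bounding the difference of suprema by the supremum of differences and applying~\eqref{eq:lip-boundH}--\eqref{eq:lip-X} over the $J-j$ remaining steps gives $\abs{v^*_{j+1}(z,x_1)-v^*_{j+1}(z,x_2)}\le(J-j)L_HL_X\abs{x_1-x_2}$. The difficulty is that ${v}_{j+1,M}$ is defined not through this representation but through the empirical scheme~\eqref{apdyn}, i.e.\ via the truncated least-squares projections $T_W\mathcal{P}_{j+1,M}[\cdot]$ onto $\Psi_{j+1}$. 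I would therefore run the Lipschitz induction on the scheme itself: the terminal case $v_{J,M}(z,\cdot)=\sup_a H_J(a,z,\cdot)$ is $L_H$-Lipschitz by~\eqref{eq:lip-boundH}, and in the inductive step the supremum over $a$ together with the $1$-Lipschitz truncation $T_W$ preserve Lipschitz bounds, so that everything reduces to the continuation term inheriting the Lipschitz constant of the next-step value function. This is the delicate point, since a generic least-squares fit need not preserve a Lipschitz constant; the route around it is to observe that the quantity actually controlled is the increment of the genuine one-step conditional expectation $\mathsf{E}[{v}_{j+1,M}(z,X^{j,\cdot}_{j+1})]$ — a true smoothing — whose modulus of continuity is governed by the kernel $x\mapsto\mathrm{law}(X^{j,x}_{j+1})$ through~\eqref{eq:lip-X} rather than by the regression coefficients. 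Once the Lipschitz bound $(J-j)L_HL_X$ is secured, the computation of the previous paragraph yields the stated estimate.
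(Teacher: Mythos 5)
Your overall strategy coincides with the paper's: bound the infimum by taking $w=v_{j+1,M}(z,\cdot)\in\Psi_j$ (legitimate by \eqref{eq:basis-rf}), rewrite $\widetilde{C}_j(z,\cdot)-v_{j+1,M}(z,\cdot)$ via \eqref{ctil} as the conditional expectation of a one-step increment, apply Jensen, and combine a Lipschitz bound of order $JL_XL_H$ for $v_{j+1,M}(z,\cdot)$ with \eqref{eq:lip-X} in squared form. You also correctly isolate the crux: the empirical estimates $v_{j+1,M}$ are built from truncated least-squares projections, and a least-squares fit need not inherit the Lipschitz constant of $v^{\ast}_{j+1}$ (which you do establish correctly by backward induction from \eqref{eq:lip-boundH} and \eqref{eq:lip-X}).

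The genuine gap is in your proposed ``route around'' that crux. After the Jensen step the quantity to control is $\mathsf{E}_{\mathcal{G}_{j+1,M}}\bigl[\bigl|v_{j+1,M}(z,X^{j,x}_{j+1})-v_{j+1,M}(z,x)\bigr|\bigr]$, which requires a modulus of continuity for $v_{j+1,M}(z,\cdot)$ \emph{itself}. The smoothing property of the kernel $x\mapsto\mathrm{law}(X^{j,x}_{j+1})$, via \eqref{eq:lip-X}, controls increments in $x$ of the smoothed object $\widetilde{C}_j(z,\cdot)$, not the distance between $v_{j+1,M}$ at two nearby arguments, so it cannot substitute for the missing Lipschitz bound; as written, that step does not close. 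The paper resolves this point differently: having shown that $v^{\ast}_j(y,\cdot)$ is $JL_XL_H$-Lipschitz, it modifies the estimator --- ``by using an additional truncation'' --- so that the Lipschitz constants of the estimates $v_{j,M}(z,\cdot)$ are bounded by $JL_XL_H$ with probability one; that is, the Lipschitz property is enforced by construction rather than derived from the regression. Admittedly the paper's own justification is a single sentence, but it does supply a mechanism (a Lipschitz truncation of the estimates) exactly where your argument leaves the inheritance of the Lipschitz constant unestablished.
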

\begin{proof}
We have
\begin{eqnarray*}
\sup_{z\in\mathcal{L}}\underset{w\,\in\,\,\Psi_{j}}{\inf}\left\Vert
\widetilde{C}_{j}(z,\cdot)-w\right\Vert _{L_{2}(\mu_{j}\otimes \mathbb{P}_M)}\leq \sup_{z\in\mathcal{L}}\left \|  \mathsf{E}_{\mathcal{G}_{j+1,M}}\bigl[v_{j+1,M}(z,X^{j,\cdot}_{j+1}%
)-v_{j+1,M}(z,\cdot)\bigr]\right \|_{L_2(\mu_j\otimes \mathbb{P}_M)}.
\end{eqnarray*}
 Under assumptions \eqref{eq:lip-boundH} and \eqref{eq:lip-X}, we then have
\begin{eqnarray*}
\max_{j=1,\ldots,J}\sup_{y\in \mathcal{L}}|v^{\ast}_{j}(y,x_1)-v^{\ast}_{j}(y,x_2)|\leq JL_XL_H|x_1-x_2|, \quad \forall x_1,x_2\in \mathcal{X}.
\end{eqnarray*}
By using an additional truncation, one can achieve that the Lipschitz constants of the estimates
 $v_{j,M}(z,\cdot),$ $j=0,\ldots,J-1,$ are all uniformly bounded by a constant $JL_XL_H$ with probability $1.$
\end{proof}
The above proposition implies that if $ JL_H$ stays bounded for $J\to \infty,$ (for example if $H$ scales as $1/J$), then the approximation error  $\underset{w\,\in\,\,\Psi_{j}}{\inf}\Vert
\widetilde{C}_{j}(z,\cdot)-w\Vert _{L_{2}(\mu_{j}\otimes \mathbb{P}_M)}$ becomes small as $J\to \infty.$ Note that the latter property can not be guaranteed when using  fixed (nonadaptive) linear spaces $\Psi_j.$ Of course, the exponential in $J$ factor in \eqref{eq:main-bound} will lead to explosion of the overall error as $J\to \infty,$ but the above observation still indicates that the inclusion of the functions ${v}_{j+1,M}(y_{1},x),\ldots,{v}_{j+1,M}(y_{R},x)$ into $\Psi_j$ can  significantly improve the quality of the estimates $v_{j,M}(z,\cdot)$  especially in the case of large $J$.
Concerning the dependence of the bound \eqref{eq:main-bound} on $J$, we note that this estimate is likely to be too pessimistic, see also a discussion in \cite{Z}. 

\section{Numerical examples}
\label{sec:numerical-examples}

We now present various numerical examples which demonstrate the accuracy of the reinforced regression algorithm in practice. To allow for a direct comparison with the reinforced regression algorithm of~\cite{belomestny2020optimal}, we first consider a (single) optimal stopping problem, more particularly a Bermudan max-call option. Our second example is a multiple stopping problem, for which the hyperparameters already become crucial. Finally, our last example is an optimal control of a gas storage.

We have tested both algorithms A and B for the HRR (hierarchical reinforced regression) method, however, the latter version always gave slightly better results and therefore we have only included the value obtained with the algorithm B.
Intuitively this is to be expected, as the algorithm B uses more accurate regression targets in the backwards induction.
The algorithm A may be of use in situations where one is interested in improving the approximation until a certain accuracy threshold is reached, however, properly done this approach should also include a calculation of upper bounds, which we do not discuss in our paper.

Before, let us also mention how a lower biased estimate to the value of a control problem in a Markovian setting is calculated using the result of a regression procedure.
Let $c$ be an approximation to the function $c^{\ast}$ given by
\begin{equation*}
\copt_j(x,y) = \E \left[v_{j+1}^{\ast}(y, X_{j+1}^{j,x}) \right], \quad x\in\Rd,\; y\in\mathcal{L},\; j \in\{ 0, ..., J-1\},
\end{equation*}
with $\cmc_J \equiv \copt_J \equiv 0$ by convention.
Using the hierarchical reinforced regression method, such an approximation is given by $c^{(I)}$ defined in \eqref{eq:reinfroced_continuation_function}.
Further let $(X^{(m)})_{1 \le m \le M_{\mathrm{test}}}$ be sample trajectories from the underlying Markov chain, generated independently from the samples used in the regression procedure. 
Then we can iteratively define a sequence of polices $(\mathbf{A}^{(m)})_{1 \le m \le M_{\mathrm{test}}}$ with $\mathbf{A}^{(m)} = (A^{(m)}_0, ..., A^{(m)}_J)$ by
\begin{equation*}
A^{(m)}_j := \argmax_{A \in K_j(Y^{(m)}_j, X_j^{(m)})} \left( H_j(A, Y_j^{(m)}, X^{(m)}_j) + c_{j}(\varphi(A, Y^{(m)}_j), X^{(m)}_j)\right),
\end{equation*}
or all $m = 1, ..., M_{\mathrm{test}}$ and $j = 0, ..., J$, where $Y^{(m)}_0 := y_0 \in \mathcal{L}$ and $Y^{(m)}_{j+1} := \varphi_{j+1}(A_{j}^{m},Y^{(m)}_{j})$.
It then follows from the definition, that each $\mathbf{A}^{(m)}$ is an admissible sequence of policies, i.e. $\mathbf{A}^{(m)} \in \mathcal{A}_0(y_0, X^{(m)})$.
Therefore, a lower estimate to the value $\E(v(y_0, X_0))$ is given by
\begin{equation*}
\frac{1}{M_{\mathrm{test}}}\sum_{m=1}^{M_{\mathrm{test}}} \sum_{j = 0}^J H_j(A^{(m)}_j, Y_j^{(m)}, X_j^{(m)}).
\end{equation*}

Lower bounds allow a direct comparison of the performance of different methods, in the sense that the method yielding the highest lower bound (up to Monte Carlo errors) performed best, since this value must be closest to the true value of the control problem.
This direct comparison is, however, not possible for the approximate value $v_0$ which may lie above or below the true value.
Our main premise in the following is that the HRR algorithm is a more efficient way to improve the performance of the regression algorithm as compared with increasing the complexity of the regression basis.
Hence, for this relative comparison it is also sufficient to study lower bounds.

\subsection{Bermudan max-call option}
In this section we evaluate the performance of the hierarchical reinforced regression (HRR) method from Section \ref{sec:altern-algor} on the valuation of a Bermudan max-call option.
Let $(\Omega, \F, (\F_t)_{0\le t \le T}, P)$ be a filtered probability space on which a $d$-dimensional Brownian motion $W = (W(t))_{0 \le t \le T}$ is defined.
Further let $X=(X(t))_{0\le t \le T}$ be the geometric Brownian motion defined by
\begin{equation*}
dX^{k}(t) = (r - \delta) \, X^{k}(t) \, dt + X^k(t)\, \sigma \, dW^k(t),  \quad X^k(0) = x_0, \quad 0 \le t \le T, \quad k \in \{1, ..., d\},
\end{equation*}
where $x_0, r, \delta, \sigma > 0$.
Option rights can be exercised on a predefined set of possible exercise dates $\{t_0, t_1, ..., t_J\}$, where at most one right can be exercised on any given date.
Assume that the exercise dates are equidistant $t_j := j\cdot \Delta t$ for all $j \in \{0, ...,  J\}$ with $\Delta t := T/J$ and define the underlying Markov chain $(X_j)_{j = 0,...,J}$ by $X_j = X({j \Delta t})$.
Recall from Example \ref{ex:multiple-stopping} that in order to model a multiple stopping problem in the optimal control framework we define the set of policies by $\mathcal{K}= \{0,1\}$ and the set of controls by $\mathcal{L}= \{0, ..., y_{max}\}$,
where $y_{max}$ is the number of exercise rights.
Further, we define
\begin{equation*}
  \varphi_j(a,y) \coloneqq (y-a)_+, \quad K_j(x, y) \coloneqq \{0, 1\wedge y\}, \quad H_j(a, y, x) \coloneqq a \cdot g_j(x) \, e^{-t_j r},
\end{equation*}
for all $y\in\mathcal{L}$, $a \in \mathcal{K}$ and $j \in \{0, ..., J\}$, where $g$ is the max-call pay-off function defined by
\begin{equation*}
g(x) \coloneqq (\max\{x^1, ...., x^d\} - C)_+, \quad x\in\Rd,
\end{equation*}
where $C \in \mathbb{R}_+$ is the option strike.
Then the value function $v^*_0(y_{max}, x_0)$ defined in \eqref{eq:7}  yields the value of the Bermudan max-call option with underlying $X$ and data $(d, J,T, y_{max}, x_0, C, r, \delta, \sigma)$.

 \subsubsection{Single exercise right}
We will first consider the case of a single exercise right $y_{max} = 1$.
This is a standard example in the literature, see for example \cite{broadie1997stochastic, J_Rogers2002, J_AB2004} and more recently \cite{becker2019deep}.
The performance of the reinforced regression method for this example was already analyzed in \cite{belomestny2020optimal}. 
We revisit this example in order to demonstrate that even in the optimal stopping case our novel HRR method allows for improvements in computational costs without sacrificing the quality of the estimations.

\begin{table}[!htbp]\centering
\begin{tabular}
[c]{c|c|c|c|c|c}
\multirow{3}{*}{$d$}		& \multirow{3}{*}{Basis} & \multicolumn{3}{c|}{Lower bounds} & \multirow{3}{*}{CI from \cite{becker2019deep}} \\ \cline{3-5}
								&						& Regression			& HRR						& Reinf. Reg.			& \\\cline{3-5}
								&						& $I=0$					& $I=1$					& $I=9$					& \\ \hline
\multirow{4}{*}{2}		& $\Psi_1$ 		& 13.015 (0.022)	& 13.772 (0.015)	& 13.794 (0.015)	& \multirow{4}{*}{[13.880,13.910]}\\
								& $\Psi_{1,g}$	& 13.679 (0.019)	& -							&  -							& \\
								& $\Psi_2$		& 13.775 (0.016)	& 13.871 (0.015)	&  13.882 (0.014)	& \\
								& $\Psi_3$		& 13.874 (0.016)	& -							& -							& \\\hline
\multirow{4}{*}{3}		& $\Psi_1$		& 17.764 (0.029)	& 18.526 (0.017)	& 18.540 (0.018)	& \multirow{4}{*}{[18.673,18.699]}\\
								& $\Psi_{1,g}$	& 18.404 (0.022)	& -							& -							& \\
								& $\Psi_2$		& 18.519 (0.020)	& 18.639 (0.017)	& 18.653 (0.017)	& \\
								& $\Psi_3$		& 18.655 (0.021)	& -							& -							& \\\hline
\multirow{3}{*}{5}		& $\Psi_1$		& 25.463 (0.024)	& 25.998 (0.021)	& 25.990 (0.019)	& \multirow{3}{*}{[26.138, 26.174]} \\
								& $\Psi_{1,g}$	& 25.823 (0.026)	& -							& -							& \\
								& $\Psi_2$		& 25.990 (0.023)	& 26.097 (0.019)	& 26.109 (0.020)	& \\
								& $\Psi_3$		& 26.111 (0.022)	& -							& -							& \\\hline
\multirow{3}{*}{10}		& $\Psi_1$		& 38.022 (0.025)	& 38.234 (0.022)	& 38.225 (0.024)	& \multirow{3}{*}{[38.300,38.367]} \\
								& $\Psi_{1,g}$	& 38.058 (0.024)	& -							&  -							& \\
								& $\Psi_2$		& 38.299 (0.023)	& 38.316 (0.020)	&  38.331 (0.021)	& \\
								& $\Psi_3$		& 38.349 (0.021)	& -							& -							& \\\hline
\end{tabular}
\vspace{.2cm}
\caption{Lower bounds ($\pm$ 99.7\%  Monte-Carlo error) for the value of the Bermudan max-call option with data $J=9$, $T=1$, $y_{max}=1$, $x_0 = C = 100$, $r=0.05$, $\delta=0.1$, $\sigma=0.2$ and different numbers of underlying assets $d \in \{2, 3, 5, 10\}$.
For all methods we used $M=10^6$ training sample paths and $M_{\mathrm{test}}=10^7$ paths for calculating the lower bound.
The last column presents the 95\% confidence intervals for the value of the Bermuda option from \cite{becker2019deep}.}
\label{tab:max_call_bounds}
\end{table}

Define the functions $f_i: \Rd \to \R$, $x\mapsto \operatorname{sort}(x^{1}, \ldots, x^{d})^{i}$, the $i$th largest entry of $x$, for $i \in \{1, \dots, d\}$
and consider the following three sets of regression basis functions:
\begin{equation*}
\begin{array}{c}
\Psi_1 \coloneqq \{ 1, f_1, ..., f_d \}, \quad \Psi_{1,g} \coloneqq \Psi_1 \cup \{g\}, \quad  \Psi_2 \coloneqq \Psi_1 \cup\{ f_i \cdot f_j \;\vert\; 1 \le i \le j \le d\} \\
\Big.\Psi_3 \coloneqq \Psi_1 \cup \Psi_2 \cup\{ f_i \cdot f_j \cdot f_k \;\vert\; 1 \le i \le j \le k \le d\}.
\end{array}
\end{equation*}
Note that the cardinalities of these sets are given by $|\Psi_1|=d+1$, $|\Psi_{1,p}|=d+2$, $|\Psi_2|=\frac{1}{2}d^2+\frac{3}{2}d +1$, and $|\Psi_3|=\frac{1}{6}d^3 + d^2 + \frac{11}{6}d+1$, respectively.
Regarding the HRR method, we use the algorithm of the second type described in Section \ref{sec:modified_reinforced_regression}.
Further note that in the optimal stopping case there is only one choice for the set of reinforced value function for the HRR method since $\mathcal{L}=\{1\}$ and therefore we always set $\mathcal{L}^1 = \{1\}$.

\begin{figure}[!htbp]\centering
\includegraphics[width=0.9\textwidth]{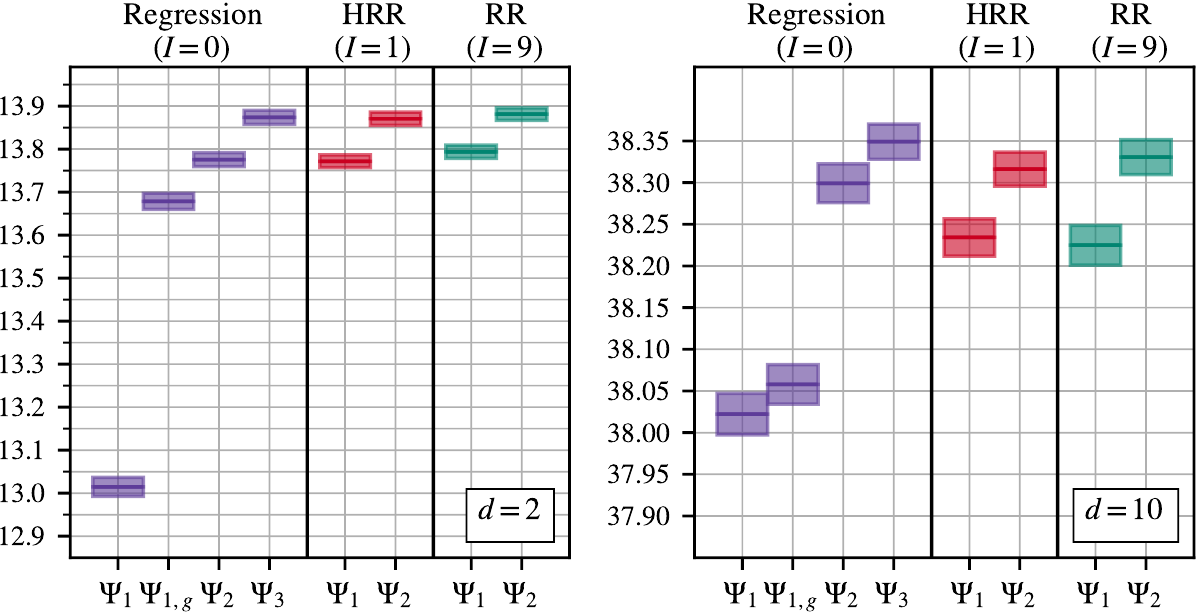}
\caption{A visualization of the lower bounds from Table \ref{tab:max_call_bounds}.}\label{fig:max_call_lower_bounds}
\end{figure}

We considered two different set-ups for the comparison of the different methods: 
\begin{itemize}
\item First we keep the number of exercises dates $J$ fixed and vary the number of underlying assets $d$;
\item Second we keep $d$  fixed and vary $J$ (while also keeping $T$ fixed).
\end{itemize}
 
In Table \ref{tab:max_call_bounds} we present lower estimates to the value of a Bermudan max-call option with a single exercise right for $J=9$ and $d \in \{2, 3, 5, 10\}$.
In the corresponding Figure \ref{fig:max_call_lower_bounds} we have visualized the lower bounds for the comparison between the different regression methods.
For each of the considered methods we used $M=10^6$ simulated training samples paths to determine the regression coefficients and $M_{\text{test}} = 10^7$ paths for calculating the lower bounds.

In order to give the reader an easy reference point, in Table \ref{tab:max_call_bounds} we have also included the confidence intervals for the value of the Bermudan max-call option from \cite{becker2019deep}.
Note however that we do not aim for an improvement of the latter values in terms of benchmarking.
In fact the method used in \cite{becker2019deep} is quite different from ours, as it uses deep neural networks for approximating the optimal stopping policies at each time step, and a direct comparison would require the usage of higher order polynomials for our method.

We first observe that across all numbers of assets $d$ the HRR method with the set of basis functions $\Psi_1$ performs significantly better then the standard regression method with the basis $\Psi_1$ and $\Psi_{1,g}$.
The same holds true when comparing the methods using the regression basis $\Psi_2$.
More importantly however, we observe that for $d\le 5$ the HRR method with basis $\Psi_1$ yields lower bounds of the same quality as obtained with the standard method and the larger basis $\Psi_2$.
The same holds true when comparing the HRR method with basis $\Psi_2$ against the standard method with the basis $\Psi_3$.
In the case $d=10$ assets, the lower bounds obtained with the HRR method and basis $\Psi_1$ lie just slightly below the values of the lower bounds obtained with standard method and the basis $\Psi_2$, however one has to keep in mind that in this case $|\Psi_1|=11$ and $|\Psi_2|=286 $.
Moreover, we see that the HRR method with a recursion depth $I=1$ performs just as well as the (full depth) reinforced regression method ($I = J=9$).

\begin{figure}[!htbp]\centering
\includegraphics[width=0.7\textwidth]{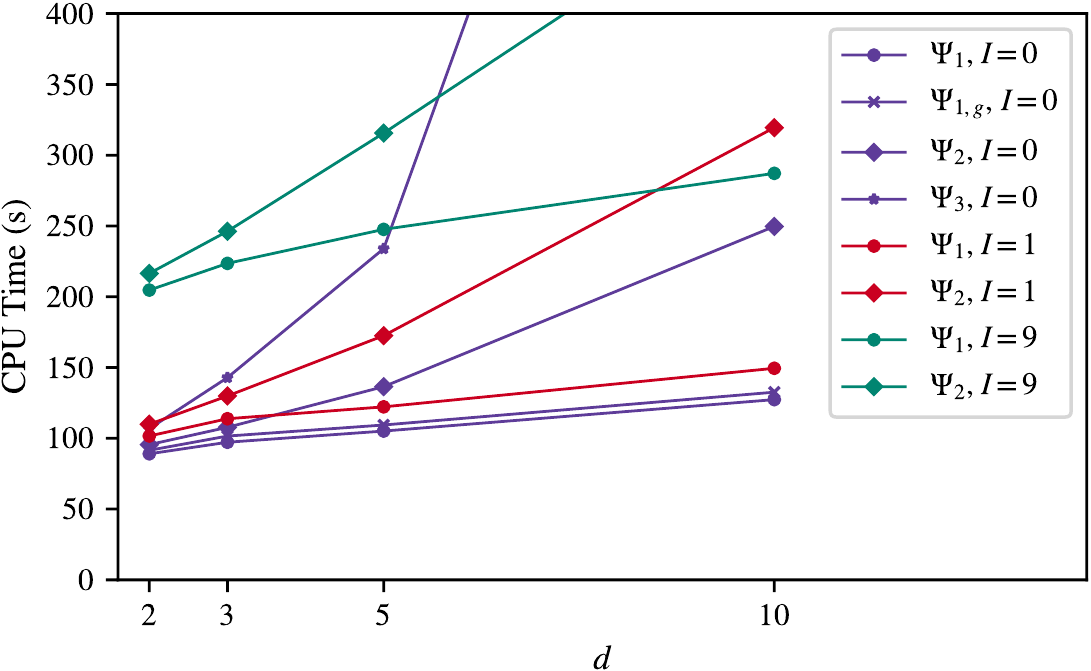}
\caption{The elapsed CPU times during the backwards induction and calculation of the lower bounds from Table \ref{tab:max_call_bounds}, plotted with respect to the number of underlying assets $d$.}
\label{fig:max_call_cpu}
\end{figure}

Furthermore, in Figure \ref{fig:max_call_cpu} we have visualized the corresponding elapsed CPU times during the backwards induction and the calculation of the lower bounds.
As foreshadowed in Section \ref{sec:computational-cost}, we see that the computational costs of the HRR method are significantly reduced by choosing a small recursion depth $I$. 
In particular, we are able to state that for sufficiently large $d$ ($d \ge 5$ respectively $d\ge 3$) the HRR method with recursions depth $I=1$ and the basis $\Psi_1$ respectively $\Psi_2$ is more efficient then the standard method with the basis $\Psi_2$ respectively $\Psi_3$.

\begin{figure}[htp!]\centering
\includegraphics[width=0.7\textwidth]{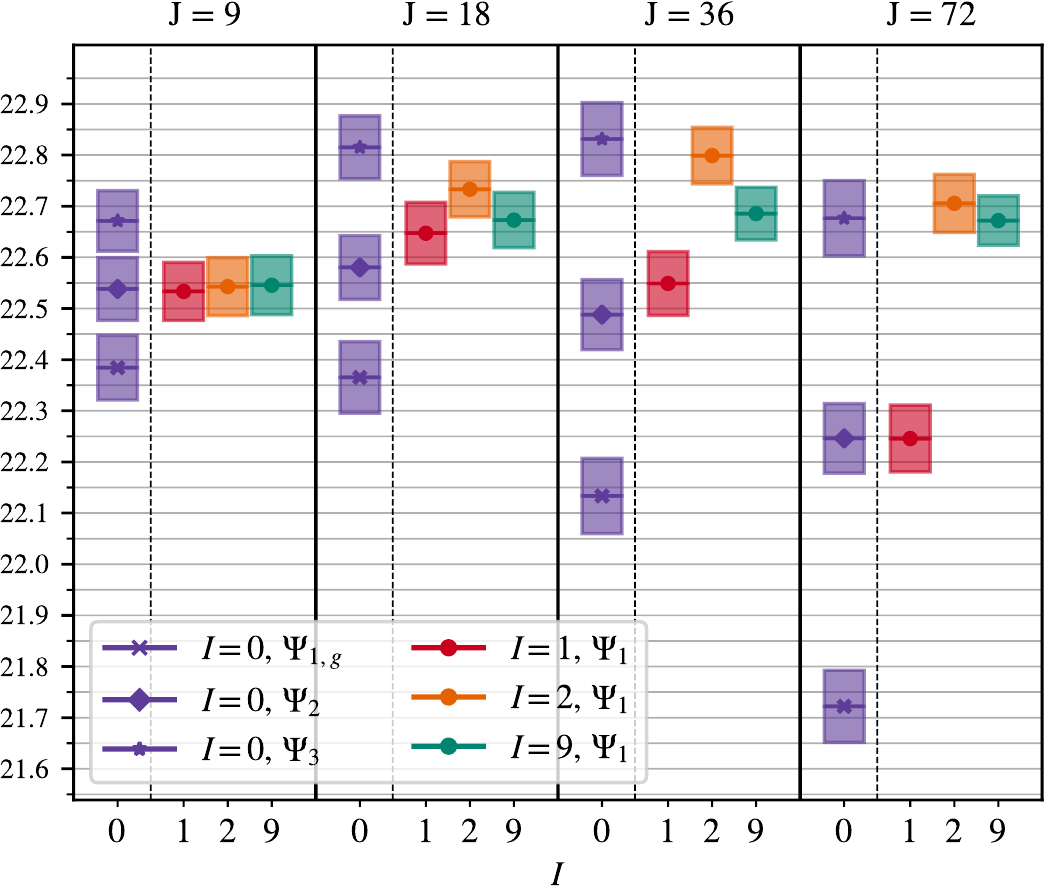}
\caption{Visualization of the lower bounds ($\pm$ 99.7\% Monte-Carlo error) of the values of Bermuda max-call options with $J=9, 18, 36, 72$ exercise dates and $d=4$, $T=1$, $y_{max}=1$, $x_0=C=100$, $r=0.05$, $\delta=0.1$, $\sigma=0.2$.
The values are obtained with the standard regression method $I=0$ and the HRR method $I=1, 2, 9$.
For all methods we used $M=10^6$ training sample paths and $M_{\mathrm{test}}=10^6$ sample paths for the calculation for the lower bounds.}
\label{fig:max_call_increasing_j}
\end{figure}

The results of the second set-up are visualized in Figure \ref{fig:max_call_increasing_j}.
In this case, we have approximated the value of Bermudan max-call options with a fixed number of assets $d=4$ and different numbers of exercise dates $J \in \{9, 18, 36, 72\}$, while also keeping $T=1$ fixed.
When keeping all other parameters fixed, the value of the option clearly is non-decreasing in the number of exercise dates $J$.
Our first observation is that for all considered methods there exists a threshold for $J$ at which the performance worsens. Indeed, Corollary~\ref{cor:convergence} shows that the approximation error depends exponentially on $J$.

However, in practice, some methods are less vulnerable to the error explosion in $J$ than others.
In this example, we see that the standard regression method with the basis $\Psi_{1,g}$ and $\Psi_{2}$, respectively,  starts to perform worse for $J\ge 36$.
The lower bounds calculated with the HRR method with the basis $\Psi_{1}$ and $I=1$ stay approximately on the same level as the lower bounds calculated with the standard method and the basis $\Psi_{2}$, for all numbers of exercise dates.
The lower bounds calculated with the standard method and the basis $\Psi_3$ first increase when moving from $9$ to $18$ exercise dates and decrease at last when moving from $36$ to $72$ exercise dates.

The main observation is that when we increase $J$, the HRR methods with $I=2$ and $I=9$ come closer to the lower bounds calculated with the standard method and the basis $\Psi_3$.
This underlines the theoretical discussion for $J\to\infty$ from Section \ref{sec:convergence-analysis}.
Moreover, we see that the HRR method performs at least as well with $I=2$ as with $I=9$.
Regarding CPU time, the HRR method with $I=2$ is more efficient than then the standard regression method with the basis $\Psi_3$ for $J\ge9$.
Comparing the HRR methods with $I=2$ and $I=9$, we see that choosing the parameter $I$ small is necessary to in order to obtain desirable efficiency.
Since on the other hand, the HRR method with $I=1$ performed significantly worse then with $I=2$, we also see that in this case it was necessary to choose $I>1$.
These observations underline the relevance of the HRR in its full complexity even in the case of optimal stopping problems.

\subsubsection{Multiple exercise rights}
Next we consider a Bermudan max-call option with $y_{max} = 4$ exercise rights.
In this case the HRR method allows for different possibilities of reinforced value functions depending on the choice of the sets $\mathcal{L}^y$ (recall Remark \ref{rem:choice_of_Ly}).
Since $y_{max}$ is small, we choose $\mathcal{L}^y \equiv \{1, 2, 3, 4\}$ for simplicity.

\begin{table}[h]\centering
\begin{tabular}
[c]{c|c|c|c|c|c}
\multirow{2}{*}{Basis} & Regression & \multicolumn{4}{c}{Hierarchical Reinforced Regression} \\\cline{2-6}
& $I=0$ & $I=1$ & $I=2$ & $I=3$ &   $I=5$ \\ \hline 
$\Psi_1$ & 90.863 (0.072) &  92.038 (0.070) & 92.287 (0.070)& 92.311 (0.067) & 92.357 (0.061)\\ 
$\Psi_{1,g}$ & 91.837 (0.082) & - & - & - & - \\
$\Psi_2$ & 92.140 (0.070) & 92.418 (0.064) & 92.548 (0.060)& 92.631 (0.061) & 92.625 (0.061)\\
$\Psi_3$ & 92.571 (0.069) & - & - & - & -  \\
\end{tabular}
\vspace{.2cm}
\caption{Lower bounds ($\pm$ 99.7\%  Monte-Carlo error) for the value of the Bermudan max-call option with data $J=24$, $T=2$, $y_{max}=4$, $x_0 = C = 100$, $d=5$, $r=0.05$, $\delta=0.1$, $\sigma=0.2$.
For all methods we used $M=10^6$ training sample paths and $M_{\mathrm{test}}=10^7$ paths for calculating the lower bound.
An upper bound to the value, calculated with the dual approach from \cite{J_Schoen2010} and  \cite{J_BenSchZha}, is given by $92.971$ $(0.043)$, with the HRR method with $I=3$ and basis $\Psi_2$ using $10^5$ outer and $10^3$ inner sample paths.}
\label{tab:multiple_max_call_bounds}
\end{table}

In Table \ref{tab:gas_storage_results}
we present lower bounds to the value of a Bermuda max-call option with $y_{max}=4$ exercises rights, obtained with the standard regression method and HRR method for different choices of regression basis functions and the parameter $I$, with the implementation of the second type described in Section \ref{sec:modified_reinforced_regression}.
We first observe that for a fixed set of basis functions $\Psi_1$ or $\Psi_2$ increasing the parameter $I$ yields increased, and thus improved, lower bounds.
This improvement is most significant when moving from $I=0$ (standard regression) to $I=1$ and from $I=1$ to $I=2$ and becomes less significant when further increasing $I$.
Moreover, we observe that the HRR method with $I=1$ and basis functions $\Psi_1$ yields better lower bounds then the standard regression method with the larger set of basis functions $\Psi_{1,g}$ and more importantly, for $I\ge 2$ the HRR with basis functions $\Psi_1$ method yields better lower bounds than the standard regression method with the even larger set of basis functions $\Psi_{2}$.
This observation prevails when comparing the standard regression method with the basis $\Psi_3$ against the HRR method with the basis $\Psi_2$.
We can therefore conclude that the HRR method yields results of better quality than standard regression using fewer regression basis functions.
Moreover, we realize that up to changes that are insignificant with respect to the Monte Carlo error, the HRR reaches its best performance already for $I=3$, thus further increasing $I$ is not necessary.

\newcommand{\dd}{\mathrm{d}}
\subsection{A gas storage problem}
In this subsection we consider a gas-storage problem of the kind introduced in Example \ref{ex:gas-storage}.
In contrast to the example in the previous subsection, this optimal control problem is not of a multiple stopping type, which is a consequence of the anti-symmetry in the policy set: injection of gas into the facility $(a=1)$, no action $(a=0)$ and production of gas $(a=-1)$.

For the gas price we use a similar but slightly more elaborate model to the one proposed in \cite{thompson2009natural} (and also used in \cite{gyurko2011monte}).
More specifically, we use the following joint dynamics to model the \emph{price of crude oil} $X^1$ and the \emph{price of natural gas} $X^2$
\begin{equation}\label{eq:oil_gas_price_dynamics}
\begin{split}
\dd X^1(t) &= \alpha_1(\beta - X^1(t)) \dd t + \sigma_1 X^1(t) \dd W^1(t) + \left(J^1_{N(t-)+1} - X^1(t) \right)\dd N(t) \\
\dd X^2(t) &= \alpha_2(X^1(t) - X^2(t)) \dd t + \sigma_2 X^2(t) \dd W^2(t) + \left( J^2_{N(t-)+1} - X^2(t) \right)\dd N(t),
\end{split}
\end{equation}
for $0 \le t \le T$, where $\beta, \alpha_i, \sigma_i > 0$ for $i=1,2$, $W^1$ and $W^2$ are  Brownian motions with correlation $\rho_W \in [0,1]$, $N$ is a Poisson process with intensity $\lambda>0$ and $(J_k)_{k = 1, ...}$ are i.i.d. normal distributed random vectors with $J^i_1 \sim \mathcal{N}(\mu_i, \eta_i^2)$, $\mu_i, \eta_i >0$ and $\rho_J = \mathrm{Cor}(J_1^1, J_1^2) \in [0,1]$.
Moreover we assume that $(W^1, W^2)$, $N$ and $(J^1,J^2)$ are independent.
Note that both $X^1$ and $X^2$ are mean reverting processes with jump contributions.
The oil price process $X^1$ reverts to the long-term constant mean $\beta$ and the gas price process $X^2$ reverts towards the oil price $X^1$, which is aiming to model the well known strong correlation between crude oil and natural gas prices. 
Note also that we have assumed for simplicity that the jump signal, which has the purpose of modeling price peaks, is the same Poisson process for both oil and gas prices, however the magnitude of the jumps is given by different (but correlated) normal distributed random variables.

Denote by $(\widetilde{X}_j)_{j = 1, ..., 365}$ the 2-dimensional Markov chain that is obtained by discretizing the above SDE \eqref{eq:oil_gas_price_dynamics} with an Euler-scheme on the time interval interval $[0,1]$.
We assume that the manager of the gas storage facility has the possibility to buy and sell gas on a predefined set of dates in the year $\{t_j\}_{j = 1, ..., J} \subset \{1, ..., 365\}$ with $t_j = j \cdot \delta t$ and some $\delta t, J \in \mathbb{N}$ such that $\delta t \cdot J \le 365$.
The 2-dimensional Markov chain underlying the optimal control problem is then given by $X = (X_j)_{j = 0,..., J}$ with $X_j := \widetilde{X}_{t_j}$.

Recall from Example \ref{ex:gas-storage} that we assume that the volume of gas in the storage can only be increased or decreased by a fraction $\Delta = 1/N$ for some $N\in\mathbb{N}$ over the time interval of $\delta t$ days.
The state space of the control variable is then given by $\mathcal{L} = \{0, \Delta, 2\Delta, ..., 1\}$.
Also recall the definition of the space of policies $\mathcal{K}$, the constraint sets $K_j$ and the function $\varphi_j$ from Example \ref{ex:gas-storage}.
We assume that there is no trading at $j=0$ hence $K_0 \equiv \{0\}$.
The cash-flow underlying to the optimal control problem only depends on the second component of the Markov chain $X_j$ and is given by
\begin{equation*}
H_j(a, y, x) = - a\cdot \Delta \cdot x^2 \cdot e^{- r  j(\delta t /365)}, \quad a\in\mathcal{K}, \, y\in\mathcal{L},\; j = 1, ..., J,
\end{equation*}
where $r>0$ is the interest rate.

We do not pay attention to the physical units of the parameters quantifying the gas storage capacity and the quotation of the gas price, since the linearity of the pay-off with respect to the parameter $\Delta$ and the gas price $X^2_j$ allows to properly scale the resulting value of the optimal control problem.
The following specific choice of the price model parameters are oriented at the values in \cite{thompson2009natural}
\begin{equation}\label{eq:parameter_choice_gas}
\begin{array}{c}
\beta = 45, \quad \alpha_1 = 0.25, \quad  \alpha_2 = 0.5,  \quad \sigma_1= \sigma_2 = 0.2,\quad \rho_W = 0.6,\\
 \quad  \lambda = 2, \quad \mu_1 = \mu_2 = 100, \quad \eta_1 = \eta_2 = 30, \quad \rho_J = 0.6.
\end{array}
\end{equation}
Figure \ref{fig:oil_gas_price_sample} shows a sample trajectory of the Markov chain $X$ with the above parameters.

\begin{figure}[!htbp]\centering
\includegraphics[width=0.5\textwidth]{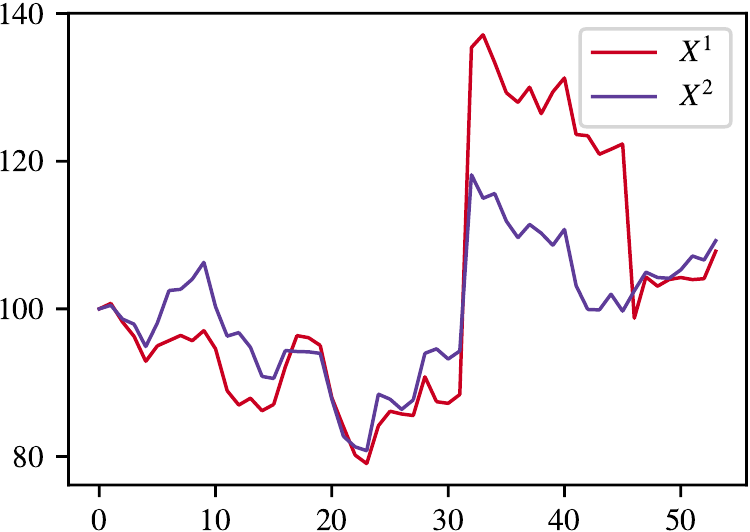}
\caption{A sample path of the Markov chain $(X_j)_{j=0,...,J} = (\widetilde{X}_{t_j})_{j=0,...,J}$
where $t_j = j\cdot 7$ and $J=52$. 
The approximation $\widetilde{X}=(\widetilde{X}_j)_{j=1, ..., 365}$ to the SDE \eqref{eq:oil_gas_price_dynamics} is simulated with the parameters given in \eqref{eq:parameter_choice_gas} and $\widetilde{X}_0 = (100, 100)$. 
$X^1$ and $X^2$ serve as models for the prices of crude oil and natural gas.}\label{fig:oil_gas_price_sample}
\end{figure}

Further we define the following sets of polynomial regression basis functions
\begin{equation}\label{eq:gas_basis functions}
\begin{split}
P_i(X^2) &:= \left\{ (x_1, x_2) \mapsto (x_2)^p \;\big\vert\; p = 0, ..., i \right\} \\
P_i(X^1, X^2) &:= \left\{ (x_1, x_2) \mapsto (x_1)^p (x_2)^q \;\big\vert\; p, q = 0, ..., i, \;\; p + q \le i  \right\}.
\end{split}
\end{equation}

\begin{table}[!htbp]\centering
\begin{tabular}{c | c | c | c}
I & Basis & $v_0(Y_0, X_0)$ & Lower bounds \\\hline
\multirow{5}{*}{0} & $P_1(X^2)$ & 78.381 & 70.489 (0.066) \\
& $P_1(X^1, X^2)$ & 78.575 & 70.635 (0.068) \\
& $P_2(X^2)$ & 73.072 & 71.253 (0.068) \\
& $P_2(X^1, X^2)$ & 73.207 & 71.402 (0.068) \\
& $P_3(X^1, X^2)$ & 72.929 & 71.333 (0.081) \\
& $P_4(X^1, X^2)$ & 72.595 & 71.498 (0.068) \\\hline
1 & $P_1(X^1, X^2)$ & 71.991 & 71.579 (0.070) \\
\end{tabular}
\vspace{.2cm}
\caption{Approximate values and lower bounds for the gas storage problem with parameters given in \eqref{eq:gas_example_params} and price model parameters given in \eqref{eq:parameter_choice_gas}.
Note that - although seemingly so - the estimate $v_0$ not necessarily presents an upper bound to the true value and is included in the table only for verification purposes.}
The quantities were obtained with the standard regression method ($I=0$) and the HRR method ($I=1$), the different sets of basis functions  \eqref{eq:gas_basis functions}, $M=10^5$ training sample paths and $M_{\mathrm{test}}=10^6$ paths for calculating the lower bounds.\label{tab:gas_storage_results}
\end{table}

We have approximated the value of the gas storage problem with the following parameters
\begin{equation}\label{eq:gas_example_params}
\delta t = 7,\quad J = 52, \quad \Delta = 1/8, \quad X_0 = (100, 100), \quad Y_0 = 4/8, \quad r = 0.1.
\end{equation}
In this configuration the gas storage facility is initially loaded with half its capacity and the gas storage manager has the possibility to trade gas every seven days, and the amount by which the manager can inject or produce gas is one height of the total capacity.
In Table \ref{tab:gas_storage_results}
we present the numerical results that were obtained with the standard regression method and the HRR method.

We used $M=10^5$ training sample paths and $M_{test}=10^6$ sample paths for the calculation of the lower bounds.
For the set of reinforced basis functions in the HRR method we have chosen $\mathcal{L}^y \equiv \{Y_0\}$, i.e. in each step of the backwards induction, the regression basis was reinforced with only one function. 

We observe at first, that the lower bounds obtained with the standard regression method are improved when using polynomials in both variables $(X^1,X^2)$ instead of just in the second variable $X^2$ (gas price) and are also improved when using polynomials of increasing order (with the only exception of the third degree polynomials).
Moreover, we observe that the lower bound obtained with the HRR method, using the set of basis functions $P_1(X^1, X^2)$ and $I=1$, lies above all lower bounds that were obtained with the standard regression, in particular the bound obtained with the regression basis $P_4(X^1, X^2)$ (up to Monte Carlo errors).
Hence the HRR method based on polynomials of degree one performed at least as well as the standard method with polynomials of degree four.

\subsection{Conclusions}
\label{sec:conclusions-1}

Let us summarize the findings of the numerical experiments.
We observe that the hierarchical reinforced regression algorithm (HRR) based on polynomial basis functions of a certain degree $\deg$ tend to produce results comparable to standard regression (SR) based on polynomial basis functions of degree $\deg + 1$ or even higher, see Figures~\ref{fig:max_call_lower_bounds},~\ref{fig:max_call_increasing_j}, Tables~\ref{tab:multiple_max_call_bounds}, and, most impressively, \ref{tab:gas_storage_results}.

The numerical results also indicate that, indeed, HRR with low depth of the hierarchy $I$ already performs very well, even if $I \ll J$, see Figures~\ref{fig:max_call_lower_bounds},~\ref{fig:max_call_increasing_j} and Table~\ref{tab:multiple_max_call_bounds}. Hence, HRR performs with similar accuracy to the reinforced regression algorithm (RR) of \cite{belomestny2020optimal}, but at much improved cost. Additionally, when comparing HRR with SR at fixed accuracy, the computational cost of HRR is usually much smaller, especially for $d$ large, see Figure~\ref{fig:max_call_cpu}.

Finally, we note that the accuracy of the HRR method increases substantially when the time discretization is refined, i.e., when $J$ is increased for fixed time horizon $T$. This theoretically very plausible observation (see Section~\ref{sec:convergence-analysis}) is backed up by numerical experiments, see Figure~\ref{fig:max_call_increasing_j}.

\bibliographystyle{alpha}
\bibliography{references}
\end{document}